\documentclass[11pt]{amsart}

\usepackage{amsmath,amssymb,amsthm}
\usepackage{mathtools}
\usepackage[margin=1.2in]{geometry}
\usepackage[dvipsnames]{xcolor}
\usepackage[colorlinks=true,linkcolor=MidnightBlue,citecolor=MidnightBlue,urlcolor=MidnightBlue]{hyperref}

\numberwithin{equation}{section}

\newtheorem{theorem}{Theorem}[section]
\newtheorem{lemma}[theorem]{Lemma}
\newtheorem{proposition}[theorem]{Proposition}
\newtheorem{corollary}[theorem]{Corollary}
\newtheorem{question}[theorem]{Question}
\theoremstyle{definition}
\newtheorem{definition}[theorem]{Definition}
\newtheorem*{theorem*}{Theorem}

\theoremstyle{definition}
\newtheorem{example}{Example}
\theoremstyle{remark}
\newtheorem{remark}{Remark}

\theoremstyle{remark}
\newtheorem*{open}{Open Problems}

\newcommand{\NSym}{\mathbf{NSym}}
\newcommand{\QSym}{\mathrm{QSym}}
\newcommand{\Sym}{\Lambda}
\newcommand{\rc}{\operatorname{rc}}
\newcommand{\rp}{\operatorname{rp}}
\newcommand{\rec}{\operatorname{rec}}
\newcommand{\Alt}{\mathcal{A}}
\newcommand{\bA}{\mathbf{A}}
\newcommand{\bR}{\mathbf{R}}
\newcommand{\Q}{\mathbb{Q}}
\newcommand{\Z}{\mathbb{Z}}
\DeclareMathOperator{\piu}{\pi_u}

\title[Record compositions of alternating permutations]%
{Record compositions of alternating permutations and noncommutative symmetric functions}

\author{Evan Chen}
\email{evan@axiommath.ai}
\author{Michal Mogielnicki}
\email{michal@axiommath.ai}
\author{Ken Ono}
\email{ken@axiommath.ai}

\address{Axiom Math, 124 University Avenue, Palo Alto, CA 94301, USA}

\subjclass[2020]{Primary 05A05, 05E05; Secondary 05A15, 05A19}
\keywords{alternating permutation, Euler number, record, composition,
  noncommutative symmetric functions, sprout sequence}

\begin{document}

\begin{abstract}
  Amdeberhan, Shareshian, and Stanley recently proved that a
  function $\varphi$ arising in the theory of partition
  Eisenstein series counts the alternating permutations of
  $\{1,\dots,2n\}$ with a given ``record'' partition, and they asked whether
  there is a similar theory for record \emph{compositions}, suggesting a role for
  noncommutative symmetric functions.  Here we solve their open problem
  by showing that the number of alternating permutations of
  $\{1,\dots,2n\}$ with record composition
  $(\alpha_1,\dots,\alpha_\ell)$ is
  \[ \prod_{j=1}^{\ell}\binom{2s_j-1}{2\alpha_j-1}E_{2\alpha_j-1}, \]
  where $s_j=\alpha_1+\dots+\alpha_j$, $E_k$ is an Euler number, and the
  record composition of $w=a_1a_2\dots a_{2n}$ (so $a_1>a_2<a_3>\dotsb$)
  lists the factor lengths obtained by cutting $a_1a_3\dots a_{2n-1}$
  before each left-to-right maximum other than the first.
  These numbers are the coefficients of a natural lift
  of the degree-$n$ sprout symmetric function with seed $\sec(\sqrt{t}\,)$
  to noncommutative symmetric functions, expanded in products of noncommutative power sums of the first kind.
  An analogous refinement holds for every sprout sequence whose seed is given by the
  exponential formula.
  AxiomProver autonomously produced and verified the results in this paper in Lean.
\end{abstract}

\maketitle

\section{Introduction}\label{sec:intro}
\subsection{Background}
Given a partition $\lambda\vdash n$, write $m_k=m_k(\lambda)$ for the
number of parts of $\lambda$ equal to $k$ (so $\sum_k k\,m_k=n$).
In \cite{AOS}, the third author, together with Amdeberhan and Singh,
defined the functions
\[ \varphi(\lambda) = 4^n (2n)! \prod_{k=1}^n \frac{1}{m_k!}
  \left( \frac{(4^k-1) B_{2k}}{(2k) (2k)!} \right)^{m_k} \in \mathbb Z \]
where $B_{2k}$ denotes the Bernoulli number.
This function arose in the context of expressing
derivatives of Ramanujan's theta functions as traces of partition Eisenstein series.

In this paper we will only be concerned with the absolute value $|\varphi(\lambda)|$.
By \cite[equation (3.3)]{AOS}, $|\varphi(\lambda)|$
may be rewritten in terms of the \emph{Euler numbers} $E_k$ as
\begin{equation}\label{eq:phiintro}
  |\varphi(\lambda)|\;=\;(2n)!\,\prod_{k=1}^{n}\frac{1}{m_k!}
  \left(\frac{E_{2k-1}}{(2k)!}\right)^{\!m_k}.
\end{equation}
For example, $|\varphi(2,2,1)| = 10!\cdot\frac{1}{1!}(E_1/2!)^{1}
\cdot\frac{1}{2!}(E_3/4!)^{2} = 3628800\cdot\frac12\cdot(1/2)\cdot(2/24)^2 = 6300$.
For this paper we can take \eqref{eq:phiintro} as our working definition.
Note that from \eqref{eq:phiintro},
it is possible to show directly (see \cite[equation (3.1)]{ASS}) that:
\begin{equation}
  \sum_{\lambda \vdash n} |\varphi(\lambda)| = E_{2n}.
  \label{eq:rp_total}
\end{equation}

\subsection{The function $|\varphi(\lambda)|$ counts alternating permutations by record partition.}
In 2026, Amdeberhan, Shareshian, and Stanley \cite{ASS}
produced a combinatorial interpretation of \eqref{eq:rp_total},
showing that $|\varphi(\lambda)|$ provides a count of a certain class of alternating permutations.
We describe this interpretation now.

Recall that a permutation $w=a_1a_2\dots a_m$ of a set of numbers,
written in one-line notation,
is called \emph{alternating} if $a_1>a_2<a_3>a_4<\dotsb$.
For example, $3\,1\,4\,2$ and $5\,2\,4\,1\,3$ are alternating, while
$1\,3\,2\,4$ is not.
We let $\Alt_{2n}$ denote the set of alternating permutations
of $[2n] \coloneqq \{1, 2, \dots, 2n\}$.
It is well known that the number of alternating permutations
is given by the Euler number; that is, $\#\Alt_{2n} = E_{2n}$.

We attach two combinatorial statistics to each $w = a_1a_2\dots a_{2n}\in\Alt_{2n}$.
Define the \emph{odd-indexed subword}
\[ \hat w\;\coloneqq \;a_1a_3a_5\dots a_{2n-1}. \]
Recall that, in a sequence of distinct numbers,
an entry is a \emph{record} (or \emph{left-to-right maximum})
if it is larger than every entry before it.
(The very first entry is always a record.)
Cutting $\hat w$ immediately before each record (other than the first)
breaks $\hat w$ into consecutive blocks.
Then:
\begin{itemize}
  \item The ordered sequence of block lengths is called the \emph{record composition},
  and we denote it by $\rc(\hat w)$.
  \item We then define the \emph{record partition} $\rp(\hat w)$
  by sorting the entries of $\rc(\hat w)$ in weakly decreasing order.
\end{itemize}

\begin{example}
  [{\cite[Example 3.1]{ASS}}]
  \label{ex:intro}
  Take $w=7\,2\,5\,4\,8\,3\,10\,6\,9\,1\in\Alt_{10}$, so $n=5$.
  The odd-indexed subword is
  \[ \hat w=a_1a_3a_5a_7a_9=7\,5\,8\,10\,9.  \]
  Reading $\hat w$ from left to right,
  the records are $7$ at position $1$,
  then $8$ at position $3$, then $10$ at position $4$.
  (The entry $5$ at position $2$ is not a record because $5<7$,
  and $9$ at position $5$ is not a record because $9<10$.)
  Cutting $\hat w$ before the records at positions $3$ and $4$ gives the three blocks
  $7\,5\mid 8\mid 10\,9$, of lengths $2$, $1$, $2$.
  Hence
  \[ \rc(\hat w)=(2,1,2) \quad\text{and}\quad \rp(\hat w)=(2,2,1). \]
\end{example}

The combinatorial interpretation promised is then:
\begin{theorem*}
  [{\cite[Theorem 3.2]{ASS}}]
  The quantity $|\varphi(\lambda)|$ counts alternating permutations
  with record partition $\lambda$; that is,
  \begin{equation}
    |\varphi(\lambda)| = \# \left\{ w \in \Alt_{2n} \mid \rp(\hat w) = \lambda \right\}.
    \label{eq:ass32}
  \end{equation}
\end{theorem*}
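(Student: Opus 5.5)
The plan is to prove a finer statement first: count alternating permutations by record \emph{composition} $\alpha=(\alpha_1,\dots,\alpha_\ell)$. Then \eqref{eq:ass32} follows by summing over all rearrangements $\alpha$ of $\lambda$ and using a telescoping identity. The refined claim is
\[
  N(\alpha)\coloneqq\#\{w\in\Alt_{2n}\mid \rc(\hat w)=\alpha\}=\prod_{j=1}^{\ell}\binom{2s_j-1}{2\alpha_j-1}E_{2\alpha_j-1},
  \qquad s_j=\alpha_1+\dots+\alpha_j .
\]
We prove it by induction on $\ell$ (or on $n$), decomposing $w$ at its maximal entry $2n$.

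\textbf{Step 1: decomposition at the maximum.} In $w=a_1\dots a_{2n}\in\Alt_{2n}$ the entry $2n$ is a peak, so it sits at an odd position $2k-1$. It is therefore an entry of $\hat w$, and in fact it is the last record of $\hat w$. Hence the last block of $\hat w$ has length $n-k+1=\alpha_\ell$.

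Write $w=u\,(2n)\,v$ with $u=a_1\dots a_{2k-2}$ and $v=a_{2k}\dots a_{2n}$.
\begin{itemize}
\item After standardization, $u$ is an alternating permutation in $\Alt_{2k-2}$, since it starts with a descent and has even length.
\item The word $v$ has odd length $2\alpha_\ell-1$ and satisfies $a_{2k}<a_{2k+1}>\dotsb$. It is an arbitrary reverse-alternating arrangement of its entry set, and there are $E_{2\alpha_\ell-1}$ of these.
\end{itemize}
The records of $\hat w$ before $2n$ are exactly the records of $\hat u$, and nothing after $2n$ is a record. Therefore $\rc(\hat w)=(\rc(\hat u),\alpha_\ell)$.

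The map $w\mapsto(\text{entry set of }v,\ \mathrm{std}(u),\ \mathrm{std}(v))$ is a bijection onto triples in which the set is any $(2\alpha_\ell-1)$-subset of $[2n-1]$. This gives
\[
  N(\alpha)=\binom{2n-1}{2\alpha_\ell-1}E_{2\alpha_\ell-1}\,N(\alpha_1,\dots,\alpha_{\ell-1}),
\]
and induction yields the product formula. The base case is the empty composition with $n=0$.

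\textbf{Step 2: simplify the product.} Since $\binom{2s_j-1}{2\alpha_j-1}=\frac{(2s_j-1)!}{(2\alpha_j-1)!\,(2s_{j-1})!}$, the factorials telescope:
\[
  N(\alpha)=(2n)!\prod_j\frac{E_{2\alpha_j-1}}{(2\alpha_j-1)!}\prod_j\frac{1}{2s_j}
  =(2n)!\prod_j\frac{E_{2\alpha_j-1}}{(2\alpha_j)!}\;\prod_{j=1}^{\ell}\frac{\alpha_j}{s_j}.
\]
The first product depends only on the multiset of parts, i.e.\ on $\lambda$.

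\textbf{Step 3: sum over rearrangements.} It remains to show that
\[
  \sum_{\alpha}\prod_j\frac{\alpha_j}{s_j}=\prod_k\frac{1}{m_k!},
\]
where the sum runs over the distinct rearrangements $\alpha$ of $\lambda$. Summing instead over all $\ell!$ orderings of the parts (with parts of equal size treated as distinguishable), each distinct composition appears $\prod_k m_k!$ times. So it suffices to prove that for positive reals $x_1,\dots,x_\ell$,
\[
  \sum_{\sigma\in S_\ell}\prod_{j=1}^{\ell}\frac{x_{\sigma(j)}}{x_{\sigma(1)}+\dots+x_{\sigma(j)}}=1 .
\]
The argument is induction on $\ell$, peeling off the last factor $\sigma(\ell)$. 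That factor contributes $x_{\sigma(\ell)}/(x_1+\dots+x_\ell)$, and summing this over the choice of $\sigma(\ell)$ gives $1$. Probabilistically, this is removing items one at a time with probability proportional to size.

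Combining Steps 2 and 3 with \eqref{eq:phiintro} gives \eqref{eq:ass32}. The main point needing care is Step 1: one must check that the standardized $u$ really ranges over all of $\Alt_{2k-2}$, which needs care at the boundary $k=1$ where $u$ is empty. One must also check that its record composition is unaffected by the removal of $2n$ and of $v$. Everything else is routine bookkeeping.
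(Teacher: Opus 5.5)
Your proof is correct and follows the same route as the paper: the product formula of Theorem~\ref{thm:main} for record compositions, the telescoping of Lemma~\ref{lem:telescope} (as in Theorem~\ref{thm:bform}), and the rearrangement identity of Lemma~\ref{lem:zlambda}, combined exactly as in Corollary~\ref{cor:recover}. The differences are only in packaging: you unroll the paper's block-data bijection into a direct recursion that splits $w$ at the letter $2n$ (precisely the peeling described in Remark~\ref{rem:ode}), and you prove the rearrangement identity by averaging over all $\ell!$ orderings of distinguishable parts rather than tracking $z_\lambda$ through the induction.
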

Note that summing \eqref{eq:ass32}
across all partitions $\lambda$ immediately recovers \eqref{eq:rp_total}.

\subsection{Connection to symmetric functions via sprout sequences}
The preceding theorem does not make any reference to symmetric functions.
However, the paper \cite{ASS} is actually concerned with \emph{sprout sequences}.
A sprout sequence $\mathfrak{R} = (R_0=1,R_1,R_2,\dots)$,
with $R_n$ symmetric and homogeneous of degree $n$ in the variables $x=(x_1,x_2,\dots)$,
is generated from a power series $F(t)=1+a_1t+a_2t^2+\dotsb$ (the \emph{seed})
by the rule
\[ \sum_{n\ge 0}R_nt^n\;=\;\prod_{i\ge 1}F(x_it). \]
To put \cite[Theorem 3.2]{ASS} in this context, Amdeberhan, Shareshian, and Stanley
also prove that the sprout sequence with seed $F(t) = \sec \sqrt t$
recovers $|\varphi(\lambda)|$.

\begin{theorem*}
  [{\cite[Theorem 4.1]{ASS}}]
  Let $(A_0,A_1,A_2,\dots)$ denote the sprout sequence generated by the seed
  \[ F(t)=\sec(\sqrt{t}). \]
  Then we have
  \begin{equation}
    A_n = \frac{1}{(2n)!} \sum_{\lambda \vdash n} |\varphi(\lambda)| p_\lambda
    \label{eq:ass41}
  \end{equation}
  where $p_\lambda$ is a power-sum symmetric function.
\end{theorem*}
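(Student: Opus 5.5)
The plan is to use the exponential formula for power-sum expansions of sprout sequences. Write $F(t)=\exp(G(t))$ with $G(t)=\sum_{k\ge1} g_k t^k$. Then
\[ \prod_i F(x_i t) = \exp\Big(\sum_i G(x_i t)\Big) = \exp\Big(\sum_{k\ge1} g_k p_k t^k\Big), \]
because $\sum_i x_i^k=p_k$. Expanding the exponential and extracting the coefficient of $t^n$ gives
\[ R_n=\sum_{\lambda\vdash n}\prod_k \frac{(g_k p_k)^{m_k}}{m_k!}=\sum_{\lambda\vdash n}\Big(\prod_k \frac{g_k^{m_k}}{m_k!}\Big)p_\lambda . \]
So the whole task is to identify $g_k$ for the seed $F(t)=\sec\sqrt t$.

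Next I compute $G(t)=\log\sec\sqrt t$. Differentiating in $u=\sqrt t$ gives $\frac{d}{du}\log\sec u=\tan u=\sum_{k\ge1}E_{2k-1}\frac{u^{2k-1}}{(2k-1)!}$, using the standard fact that the odd Euler numbers are the Taylor coefficients of $\tan$. Integrating term by term with zero constant term yields $\log\sec u=\sum_{k\ge1}E_{2k-1}\frac{u^{2k}}{(2k)!}$. Substituting $u^2=t$ gives $g_k=E_{2k-1}/(2k)!$.

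Plugging this in, the coefficient of $p_\lambda$ in $A_n$ is $\prod_k \frac1{m_k!}\big(E_{2k-1}/(2k)!\big)^{m_k}$. By \eqref{eq:phiintro}, this equals $|\varphi(\lambda)|/(2n)!$, which is exactly \eqref{eq:ass41}. The only step needing care is the identification $\tan u=\sum E_{2k-1}u^{2k-1}/(2k-1)!$. It is classical (André), and the paper already uses the companion fact $\sec u=\sum E_{2n}u^{2n}/(2n)!$ implicitly via $\#\Alt_{2n}=E_{2n}$, so I would cite it. I would also check that taking the formal logarithm is legitimate, which holds since $F(0)=1$. There is no real obstacle here: the argument is a routine generating-function computation.
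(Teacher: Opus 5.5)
Your proof is correct and follows the same route the paper records (in Section~\ref{subsec:sym} and the last step of the proof of Corollary~\ref{cor:recover}): integrate $\tan$ to get $\log\sec\sqrt t$, feed the coefficients into the power-sum expansion of a sprout sequence, and match the result with \eqref{eq:phiintro}. The only difference is cosmetic: you derive that expansion directly with $g_k=b_k/k$, so that $\prod_k g_k^{m_k}/m_k!=z_\lambda^{-1}b_\lambda$, instead of citing \cite[Theorem~2.1(d)]{ASS}.
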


\subsection{Our refinement to counting by record compositions}
The previously stated results are for record partitions,
in the context of symmetric functions.
In {\cite[\S3]{ASS}}, Amdeberhan--Shareshian--Stanley pose the following open problem
asking whether these results can be refined to record compositions:

\begin{open}
  [Amdeberhan--Shareshian--Stanley, {\cite[\S3]{ASS}}]
  (1) Is there a refinement of \eqref{eq:ass32} to record \emph{compositions}?
  In other words, is there a function $N(\alpha)$
  on compositions $\alpha$ of $n$ such that
  \[
  N(\alpha)\;=\;\#\{\,w\in\Alt_{2n} : \rc(\hat w)=\alpha\,\}
  \qquad\text{and}\qquad
  \sum_{\alpha\,\sim\,\lambda} N(\alpha)\;=\;|\varphi(\lambda)|,
  \]
  the second sum being over all compositions $\alpha$ that are rearrangements of $\lambda$?

  (2) Moreover, since compositions play the role of
  partitions in the theory of noncommutative symmetric functions~\cite{GKLLRT},
  is there a noncommutative analog of the sprout sequence in \eqref{eq:ass41}?
\end{open}

The purpose of this paper is to answer both questions affirmatively and explicitly.
To state our results, we introduce some notation.
We write $\alpha = (\alpha_1, \dots, \alpha_\ell) \vDash n$ to mean that
$\alpha$ is a \emph{composition} of $n$,
meaning $\alpha_i > 0$ and $\alpha_1 + \dots + \alpha_\ell = n$.
We define the \emph{$j$-th partial sum} as
$s_j=s_j(\alpha)\coloneqq \alpha_1+\dots+\alpha_j$ (so $s_\ell=n$).

Our first main result is the desired refinement.
This result counts alternating permutations
of $[2n]$ by their record composition, exactly.

\begin{theorem}\label{thm:main}
  If $n \ge 1$ and $\alpha=(\alpha_1,\dots,\alpha_\ell) \vDash n$, then we have
  \[
  N(\alpha)\;\coloneqq \;\#\{\,w\in\Alt_{2n} : \rc(\hat w)=\alpha\,\}
  \;=\;\prod_{j=1}^{\ell}\binom{2s_j-1}{2\alpha_j-1}\,E_{2\alpha_j-1}.
  \]
\end{theorem}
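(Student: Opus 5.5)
The plan is to prove the formula by induction on the number of parts $\ell$, using a decomposition of $w$ at the position of its largest entry $2n$. Everything rests on one structural observation: in $w=a_1a_2\dots a_{2n}\in\Alt_{2n}$, every odd-indexed entry is a peak, meaning it exceeds its neighbours. This holds for $a_1$ since $a_1>a_2$, and for each odd $i\ge 3$ since $a_{i-1}<a_i$ and (when $i<2n$) $a_i>a_{i+1}$. Hence the maximum $2n$ sits at an odd position and is an entry of $\hat w$. Since $2n$ is the largest value, it is the last record of $\hat w$. So if $\rc(\hat w)=\alpha=(\alpha_1,\dots,\alpha_\ell)$, then $2n$ is exactly the first entry of the last block, namely $2n=a_{2s_{\ell-1}+1}$ (with $s_0=0$).

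Next we split $w=u\,(2n)\,v$, where $u=a_1\dots a_{2s_{\ell-1}}$ and $v=a_{2s_{\ell-1}+2}\dots a_{2n}$.
\begin{itemize}
\item The prefix $u$ has even length and is itself alternating (down-up).
\item The suffix $v$ has odd length $2\alpha_\ell-1$ and satisfies $a_{2s_{\ell-1}+2}<a_{2s_{\ell-1}+3}>\dotsb>a_{2n}$. So $v$ is up-down of odd length.
\item The odd-indexed subword of $w$ is $\hat u$ followed by $2n$ and the odd-indexed entries of $v$. Records of $\hat w$ before $2n$ are exactly the records of $\hat u$, and nothing after $2n$ is a record. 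Therefore $\rc(\hat w)=\alpha$ holds if and only if $\rc(\hat u)=(\alpha_1,\dots,\alpha_{\ell-1})$.
\end{itemize}

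Conversely, fix a set $S\subseteq[2n-1]$ of size $2\alpha_\ell-1$ for $v$. Take any alternating arrangement of the complement with record composition $(\alpha_1,\dots,\alpha_{\ell-1})$ as $u$, and any up-down arrangement of $S$ as $v$. Gluing $u\,(2n)\,v$ always gives an alternating permutation, because the two boundary conditions $a_{2s_{\ell-1}}<2n$ and $2n>a_{2s_{\ell-1}+2}$ hold automatically. This gives a bijection, and after standardizing $u$ and $v$ it yields the recursion
\[
N(\alpha)=\binom{2n-1}{2\alpha_\ell-1}\,E_{2\alpha_\ell-1}\,N(\alpha_1,\dots,\alpha_{\ell-1}).
\]
Here we use that the number of up-down permutations of an odd-size set of size $m$ is $E_m$; this follows by complementation from the count of down-up permutations. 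The base case is $\ell=1$, where $u$ is empty and $2n=a_1$. Since $s_\ell=n$, iterating the recursion gives exactly $\prod_j\binom{2s_j-1}{2\alpha_j-1}E_{2\alpha_j-1}$.

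The only delicate step is the record bookkeeping in the third bullet. One must check that the blocks of $\hat u$ are literally the first $\ell-1$ blocks of $\hat w$, including the edge case where $u$ is empty. One must also confirm the peak/valley pattern at the two gluing points, so that no extra constraint links $u$ and $v$. Once that is verified, the rest is a routine count.
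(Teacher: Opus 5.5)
Your argument is correct and is essentially the paper's proof in recursive form. The paper cuts $w$ at all records of $\hat w$ at once into blocks with increasing maxima, each an alternating word beginning with its maximum (Lemmas~\ref{lem:concat}, \ref{lem:decomp} and~\ref{lem:blockwords}), and then counts those ordered set partitions by your induction of peeling off the block containing $2n$ (Lemma~\ref{lem:orderedpartitions}); you fuse the two steps into one induction on $\ell$, which has the small advantage that you only ever need the maximum of the last block (one cosmetic slip: the entries of $\hat w$ after $2n$ are the even-indexed entries of $v$, not the odd-indexed ones, which is harmless since none of them is a record).
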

\begin{example}
  Continuing Example~\ref{ex:intro}, for $\alpha=(2,1,2)$ we get $(s_1, s_2, s_3) = (2,3,5)$, and
  \[
  N(2,1,2)=\binom{3}{3}E_3\cdot\binom{5}{1}E_1\cdot\binom{9}{3}E_3
  =2\cdot5\cdot168=1680.
  \]
\end{example}
\begin{remark}
  A striking feature is that $N(\alpha)$ genuinely depends on the \emph{order} of the parts:
  for example, one has $N(1,2)=20$ but $N(2,1)=10$.
  This is exactly the sensitivity to order that the passage from partitions to compositions
  is meant to capture, and it is why a naive count keyed only to the
  record partition---as in~\cite{ASS}---cannot see it.
  Summing Theorem~\ref{thm:main} over all rearrangements of a fixed partition
  $\lambda$ collapses this finer information and recovers the original
  result of~\cite{ASS}; we carry this out in Corollary~\ref{cor:recover}.
\end{remark}

The second main result rewrites $N(\alpha)$ in a form that makes the
connection with noncommutative symmetric functions transparent.  Let
\begin{equation}\label{eq:bdef}
  b_k\;\coloneqq \;\frac{k\,E_{2k-1}}{(2k)!}\qquad(k\ge1),
\end{equation}
so that $\log\sec(\sqrt{t}\,)=\sum_{k\ge1}b_k t^k/k$; these are
exactly the coefficients attached to the seed $\sec(\sqrt t\,)$
in~\cite[proof of Theorem~4.1]{ASS}, in terms of which
$A_n=\sum_{\lambda\vdash n}z_\lambda^{-1}b_\lambda p_\lambda$, where
$b_\lambda\coloneqq b_{\lambda_1}b_{\lambda_2}\dots$ and $p_\lambda$ is a
power sum symmetric function.

\begin{theorem}\label{thm:bform}
  With the notation of Theorem~\ref{thm:main}, we have
  \[
  N(\alpha)\;=\;(2n)!\,\prod_{j=1}^{\ell}\frac{b_{\alpha_j}}{s_j}
  \;=\;(2n)!\,\frac{b_{\alpha_1}b_{\alpha_2}\dots b_{\alpha_\ell}}{\piu(\alpha)},
  \qquad\text{where }\ \piu(\alpha)\coloneqq \prod_{j=1}^{\ell}s_j(\alpha).
  \]
\end{theorem}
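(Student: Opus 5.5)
Theorem~\ref{thm:bform} follows from Theorem~\ref{thm:main} by pure algebra. We rewrite each factor of the product there in terms of $b_{\alpha_j}$ and then telescope the factorials.

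First, by the definition \eqref{eq:bdef}, $E_{2k-1} = (2k)!\,b_k/k$. Substituting $k=\alpha_j$ into the $j$-th factor of Theorem~\ref{thm:main} gives
\[
\binom{2s_j-1}{2\alpha_j-1}E_{2\alpha_j-1}
=\frac{(2s_j-1)!}{(2\alpha_j-1)!\,(2s_j-2\alpha_j)!}\cdot\frac{(2\alpha_j)!\,b_{\alpha_j}}{\alpha_j}.
\]
Since $(2\alpha_j)!/\bigl(\alpha_j\,(2\alpha_j-1)!\bigr)=2\alpha_j/\alpha_j=2$, and $2s_j-2\alpha_j=2s_{j-1}$ (with $s_0=0$), this factor simplifies to
\[
\frac{2\,(2s_j-1)!}{(2s_{j-1})!}\,b_{\alpha_j}
=\frac{(2s_j)!}{(2s_{j-1})!}\cdot\frac{b_{\alpha_j}}{s_j},
\]
using $2(2s_j-1)! = (2s_j)!/s_j$.

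Next, we take the product over $j=1,\dots,\ell$. The ratios $(2s_j)!/(2s_{j-1})!$ telescope to $(2s_\ell)!/(2s_0)! = (2n)!/0! = (2n)!$. What remains is $(2n)!\prod_j b_{\alpha_j}/s_j$, which is the first expression in Theorem~\ref{thm:bform}. The second expression is the same quantity, since $\piu(\alpha)=\prod_j s_j$ by definition.

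The only step needing care is the bookkeeping of the convention $s_0=0$ in the telescoping. All the real content sits in Theorem~\ref{thm:main}, which we take as given here.
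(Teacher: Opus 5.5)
Your proposal is correct and is essentially the paper's proof. The paper first applies its telescoping Lemma~\ref{lem:telescope} with $d=2$ and then matches $E_{2\alpha_j-1}/\bigl((2s_j)(2\alpha_j-1)!\bigr)$ with $b_{\alpha_j}/s_j$ factor by factor, while you substitute $E_{2\alpha_j-1}=(2\alpha_j)!\,b_{\alpha_j}/\alpha_j$ first and telescope afterwards. This is the same computation in a different order.
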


The quantity $\piu(\alpha)$, the product of the partial sums of
$\alpha$, is precisely the denominator appearing in the expansion, due
to Gelfand, Krob, Lascoux, Leclerc, Retakh, and Thibon
\cite[Proposition~4.5]{GKLLRT}, of the noncommutative complete
symmetric function $S_n$ in the basis of products of noncommutative
power sums of the first kind:
\[ S_n=\sum_{\alpha\vDash n}\Psi^\alpha/\piu(\alpha) \]
in the algebra
$\NSym$ of noncommutative symmetric functions.  This is not a
coincidence.  Since the $\Psi_k$ freely generate $\NSym$
(Corollary~\ref{cor:psibasis}), for any
sequence of scalars $b=(b_1,b_2,\dots)$ there is a unique graded
algebra endomorphism $\phi_b$ of $\NSym$ with
$\phi_b(\Psi_k)=b_k\Psi_k$.

Given any sprout sequence with seed
$F$, writing
\[ \log F(t)=\sum_{n\ge1}b_nt^n/n \]
as in \cite[eq.\
(1.2)]{ASS}, we call $\bR_n\coloneqq \phi_b(S_n)$ the \emph{canonical
noncommutative lift} of $R_n$. Its image under the standard projection
$\chi$ from $\NSym$ to the algebra of symmetric functions is $R_n$
(see Theorem~\ref{thm:ncsprout}).  For the seed $\sec(\sqrt t\,)$, we write
$\bA_n\coloneqq \phi_b(S_n)$ with $b_k$ as in \eqref{eq:bdef}.

Our third main result then says that the record composition statistic computes
exactly the $\Psi$-coordinates of this lift.
This provides the promised analog of \eqref{eq:ass41}.

\begin{theorem}\label{thm:nsym}
  For all $n\ge1$,
  \[
    (2n)!\,\bA_n\;=\;\sum_{\alpha\vDash n} N(\alpha)\,\Psi^\alpha
    \;=\;\sum_{\alpha\vDash n}\#\{w\in\Alt_{2n}:\rc(\hat w)=\alpha\}\, \Psi^\alpha,
  \]
  and $\chi(\bA_n)=A_n$.  In particular, the expansion of
  $(2n)!\,\bA_n$ in the basis $\{\Psi^\alpha\}$ has nonnegative integer
  coefficients with the combinatorial interpretation above.
\end{theorem}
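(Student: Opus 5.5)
The plan is to combine Theorem~\ref{thm:bform} with the Gelfand--Krob--Lascoux--Leclerc--Retakh--Thibon expansion $S_n=\sum_{\alpha\vDash n}\Psi^\alpha/\piu(\alpha)$, and to get $\chi(\bA_n)=A_n$ from the general compatibility statement for canonical lifts (Theorem~\ref{thm:ncsprout}). No new combinatorics is needed beyond what Theorems~\ref{thm:main} and~\ref{thm:bform} already give.

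First, I compute $\bA_n=\phi_b(S_n)$ in the $\Psi$-basis. Since $\phi_b$ is an algebra endomorphism with $\phi_b(\Psi_k)=b_k\Psi_k$, it acts on $\Psi^\alpha=\Psi_{\alpha_1}\cdots\Psi_{\alpha_\ell}$ by
\[
\phi_b(\Psi^\alpha)=b_{\alpha_1}\cdots b_{\alpha_\ell}\,\Psi^\alpha .
\]
Applying $\phi_b$ termwise to the expansion of $S_n$ therefore gives
\[
\bA_n=\sum_{\alpha\vDash n}\frac{b_{\alpha_1}\cdots b_{\alpha_\ell}}{\piu(\alpha)}\,\Psi^\alpha .
\]
Multiplying by $(2n)!$ and invoking Theorem~\ref{thm:bform}, each coefficient becomes exactly $N(\alpha)$. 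This proves the displayed identity.

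Second, I prove $\chi(\bA_n)=A_n$. I would cite Theorem~\ref{thm:ncsprout}, whose content is that $\chi\circ\phi_b$ agrees with the symmetric-function endomorphism $p_k\mapsto b_kp_k$, because $\chi(\Psi_k)=p_k$. Should a direct check be preferred, it goes as follows. We have $\chi(\bA_n)=\sum_\alpha b_\alpha p_{\alpha}/\piu(\alpha)$, with $p_\alpha$ depending only on the sorted partition $\lambda$. The classical identity $\sum_{\alpha\sim\lambda}1/\piu(\alpha)=1/z_\lambda$, which is just $\chi(S_n)=h_n=\sum_\lambda z_\lambda^{-1}p_\lambda$ read coefficientwise, then gives $\sum_\lambda z_\lambda^{-1}b_\lambda p_\lambda=A_n$, by the formula from~\cite{ASS} recalled before Theorem~\ref{thm:bform}.

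Finally, nonnegativity and integrality of the coefficients follow because $N(\alpha)$ is a cardinality. Uniqueness of the coefficients follows because the $\Psi^\alpha$ form a basis (Corollary~\ref{cor:psibasis}). The only step with real content is Theorem~\ref{thm:bform}, i.e.\ the conversion of the binomial–Euler product into $(2n)!\prod_j b_{\alpha_j}/s_j$, which is taken as given here. So I expect no genuine obstacle beyond keeping careful track of the normalization $(2n)!$.
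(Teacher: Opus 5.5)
Your proposal is correct and matches the paper's proof. The paper likewise gets $(2n)!\,[\Psi^\alpha]\bA_n=(2n)!\,b_\alpha/\piu(\alpha)=N(\alpha)$ by applying $\phi_b$ to Proposition~\ref{prop:gkllrt} (Theorem~\ref{thm:ncsprout}(i)) and then Theorem~\ref{thm:bform}, and it deduces $\chi(\bA_n)=A_n$ from Theorem~\ref{thm:ncsprout}(ii), whose proof is exactly your direct check via Lemma~\ref{lem:zlambda} and \eqref{eq:pexp}.
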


Theorems~\ref{thm:main}--\ref{thm:nsym} thus settle both of the open
problems. Namely, the record composition count $N(\alpha)$ is the
sought refinement, and it is realized as a coordinate in
$\NSym$ exactly as anticipated in~\cite{ASS}.  Two features of the
$\NSym$ picture merit emphasis, and we record them here.

\begin{remark}\label{rem:firstkind}
  The refinement is genuinely a property of the noncommutative power
  sums \emph{of the first kind}.  The analogous rescaling of the power
  sums of the second kind produces coefficients that are symmetric in
  the parts of $\alpha$, and so cannot detect the order-dependence of
  $N(\alpha)$ (for instance $N(1,2)=20\ne10=N(2,1)$), as explained in
  Remark~\ref{rem:phi}.
\end{remark}

\begin{remark}[Limits of positivity]\label{rem:limits}
  By Theorem~\ref{thm:nsym} the coordinates of $(2n)!\,\bA_n$ in the
  basis $\{\Psi^\alpha\}$ are nonnegative integers.  In the complete
  ($S$-word) and ribbon bases of $\NSym$ they remain integers but need
  \emph{not} be nonnegative, the first failures occurring at $n=4$
  (Remark~\ref{rem:negative}).  Consequently a combinatorial
  interpretation of the $h$-expansion of $(2n)!A_n$, sought
  in~\cite[\S5]{ASS}, cannot be obtained simply by refining along the
  $S$-word coordinates of the canonical lift.
\end{remark}

The paper is organized as follows.
Section~\ref{sec:prelim} fixes notation and recalls the background on alternating permutations,
records, and the theorem of~\cite{ASS}.
Section~\ref{sec:count} proves Theorem~\ref{thm:main} by a bijection
that reorganizes the construction of \cite[Theorem~3.2]{ASS}
so that the blocks are kept in their canonical order.
Section~\ref{sec:product} proves Theorem~\ref{thm:bform}
and deduces the original partition count of \cite{ASS} (see Corollary~\ref{cor:recover}).
Section~\ref{sec:nsym} develops the noncommutative picture,
proves Theorem~\ref{thm:nsym}, and establishes the negative results of
Remarks~\ref{rem:firstkind}--\ref{rem:limits}.
Section~\ref{sec:general} isolates the mechanism behind the argument
and proves a general refinement (see Proposition~\ref{prop:expseed}) for
every sprout sequence whose seed is given by the exponential formula
\cite[Example~1.1(i)]{ASS}, of which Theorem~\ref{thm:main} is the
natural ``doubled'' variant.
Section~\ref{sec:examples} collects worked examples and complete tables for $n\le5$.
All finite assertions of the paper have been verified by exact machine
computation, as detailed in Remark~\ref{rem:verification}.
Section~\ref{sec:AI} describes the accompanying Lean/mathlib
formalization, including the AxiomProver protocol, the formal files,
and the verification environment.

\subsection*{Acknowledgments}
We thank Tewodros Amdeberhan, John Shareshian, and Richard Stanley
for posing the stimulating problem in~\cite{ASS}.

\section{Background and notation}\label{sec:prelim}

This section makes precise the notions sketched in the introduction
and fixes the conventions used throughout.  A reader comfortable with
compositions, alternating permutations, and symmetric functions may
skim it and refer back as needed; the one genuinely new definition,
that of the record composition, is given in
Section~\ref{subsec:records}.  Throughout, $[m]=\{1,2,\dots,m\}$.
All permutations are written in one-line
notation.

\subsection{Compositions and partitions}\label{subsec:comp}
A \emph{composition} of $n\ge 0$ is a (possibly empty) sequence
$\alpha=(\alpha_1,\dots,\alpha_\ell)$ of positive integers with
$\alpha_1+\dots+\alpha_\ell=n$; we write $\alpha\vDash n$ and
$\ell(\alpha)=\ell$ for its \emph{length}.  Its \emph{partial sums}
are $s_j=s_j(\alpha)=\alpha_1+\dots+\alpha_j$ for $0\le j\le\ell$
(so $s_0=0$ and $s_\ell=n$), and we set
\[
\piu(\alpha)\;\coloneqq \;\prod_{j=1}^{\ell(\alpha)} s_j(\alpha),
\]
the product of the (positive) partial sums, following the notation
of~\cite[Proposition~4.5]{GKLLRT}; thus $\piu$ of the empty composition is the
empty product $1$.  A \emph{partition} of $n$, written
$\lambda\vdash n$, is a composition whose parts weakly decrease.  We
write $\lambda(\alpha)$ for the partition obtained by sorting the
parts of $\alpha$, and say $\alpha$ is a \emph{rearrangement} of
$\lambda$, written $\alpha\sim\lambda$, if $\lambda(\alpha)=\lambda$.
For a partition $\lambda$ we write $m_k=m_k(\lambda)$ for the number
of parts equal to $k$, and
\[
z_\lambda\;\coloneqq \;\prod_{k\ge1}k^{m_k}\,m_k! \,.
\]

For $n\ge1$, set
$S(\alpha)\coloneqq \{s_1(\alpha),\dots,s_{\ell-1}(\alpha)\}\subseteq[n-1]$,
so that $\alpha\mapsto S(\alpha)$ is a bijection from the compositions
of $n$ onto the subsets of $[n-1]$.  Given $\alpha,\beta\vDash n$, we
say $\alpha$ \emph{refines} $\beta$ (equivalently, $\beta$
\emph{coarsens} $\alpha$) if $S(\beta)\subseteq S(\alpha)$;
concretely, $\beta$ is obtained from $\alpha$ by replacing consecutive
runs of parts by their sums.  Refinement is a partial order on the
compositions of $n$.

\begin{example}\label{ex:comp}
  The composition $\alpha=(2,1,2)\vDash 5$ has length $\ell=3$, partial
  sums $(s_0,s_1,s_2,s_3)=(0,2,3,5)$, and
  $\piu(\alpha)=s_1s_2s_3=2\cdot3\cdot5=30$.  Its subset is
  $S(\alpha)=\{s_1,s_2\}=\{2,3\}\subseteq[4]$, and it sorts to the
  partition $\lambda(\alpha)=(2,2,1)$.  There are three distinct
  rearrangements of $(2,2,1)$, namely $(2,2,1),(2,1,2),(1,2,2)$, with
  $\piu$-values $40,30,15$; observe that
  \[
  \frac1{40}+\frac1{30}+\frac1{15}
  =\frac{3+4+8}{120}=\frac18
  =\frac{1}{z_{(2,2,1)}},
  \qquad z_{(2,2,1)}=2^2\cdot2!\cdot1^1\cdot1!=8.
  \]
  This is no accident: Lemma~\ref{lem:zlambda} shows that the
  reciprocals $1/\piu(\alpha)$, summed over all rearrangements $\alpha$
  of any partition $\lambda$, always total $1/z_\lambda$.
\end{example}

\subsection{Alternating permutations and Euler numbers}\label{subsec:alt}
Let $B$ be a finite set of positive integers with $\#B=m$.  A
\emph{word on $B$} is a sequence $u=u_1u_2\dots u_m$ listing the
elements of $B$, each exactly once.  The word $u$ is
\emph{alternating} if
\[
u_1>u_2<u_3>u_4<\dots,
\]
that is, $u_i>u_{i+1}$ for odd $i$ and $u_i<u_{i+1}$ for even $i$,
$1\le i\le m-1$; it is \emph{reverse alternating} if all these
inequalities are reversed.  (This is the convention of~\cite{ASS};
alternating permutations should not be confused with even
permutations.)  Since the definitions depend only on the relative
order of the letters, the number of alternating words on $B$ depends
only on $m$; it is the \emph{Euler number} $E_m$, with $E_0=1$.  By a
classical theorem of Andr\'e~\cite{Andre} (see also the
survey~\cite{StanleySurvey}),
\begin{equation}\label{eq:andre}
  \sec t+\tan t\;=\;\sum_{k\ge0}E_k\,\frac{t^k}{k!},
\end{equation}
so $(E_0,E_1,E_2,\dots)=(1,1,1,2,5,16,61,272,1385,7936,50521,\dots)$.
Complementation of $B$ (replacing the $i$-th smallest letter of $B$ by
the $i$-th largest) reverses all inequalities and is an involution on
words on $B$; hence the number of reverse alternating words on $B$ is
also $E_m$.  We write $\Alt_{2n}$ for the set of alternating
permutations of $[2n]$, so $\#\Alt_{2n}=E_{2n}$.

The following counting lemma is used in~\cite[proof of
Theorem~3.2]{ASS}; we include the short proof.

\begin{lemma}\label{lem:blockwords}
  Let $B$ be a set of positive integers with $\#B=2m$, $m\ge1$.  The
  number of alternating words on $B$ whose first letter is $\max B$
  equals $E_{2m-1}$.
\end{lemma}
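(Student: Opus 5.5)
We need to count alternating words $u_1u_2\dots u_{2m}$ on $B$ with $u_1=\max B$. The plan is to strip off the forced first letter, reduce to a known count of alternating words of odd length, and then invoke complementation.

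First, the constraint $u_1>u_2$ holds automatically once $u_1=\max B$. So the words in question are in bijection with words $v=u_2u_3\dots u_{2m}$ on $B'=B\setminus\{\max B\}$, where $\#B'=2m-1$, satisfying
\[
u_2<u_3>u_4<\dots,
\]
that is, with the reverse alternating words on $B'$. Conversely, prefixing $\max B$ to any reverse alternating word on $B'$ yields an alternating word on $B$. This holds because the first comparison $\max B>u_2$ is automatic and the remaining comparisons are exactly the reverse alternating conditions shifted by one index. When $m\ge1$ the set $B'$ has $2m-1\ge1$ letters, so there is no degenerate empty case to worry about.

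Second, by the complementation involution recalled above, the number of reverse alternating words on a set of size $2m-1$ equals the number of alternating words on it. That number is $E_{2m-1}$ by the definition of the Euler numbers, since the count depends only on the size of the set.

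Combining the bijection with this count gives the claim. The argument has no real obstacle; the only care needed is checking the parity bookkeeping, namely that shifting indices by one turns the pattern $>,<,>,\dots$ after position $1$ into the reverse alternating pattern $<,>,<,\dots$ on $B'$.
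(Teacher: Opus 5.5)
Your proposal is correct and follows the paper's proof exactly. You remove the forced first letter $\max B$, identify $u_2\dots u_{2m}$ as a reverse alternating word on the $(2m-1)$-element set $B\setminus\{\max B\}$, and use complementation to count these as $E_{2m-1}$.
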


\begin{proof}
  Let $u=u_1\dots u_{2m}$ be a word on $B$ with $u_1=\max B$.  The
  condition $u_1>u_2$ holds automatically, so $u$ is alternating if and
  only if $u_2<u_3>u_4<\dotsb$, i.e., if and only if the word
  $u_2u_3\dots u_{2m}$ on the $(2m-1)$-element set $B\setminus\{\max
  B\}$ is reverse alternating.  By the complementation remark above,
  the number of such words is $E_{2m-1}$.
\end{proof}

\subsection{Records and record compositions}\label{subsec:records}
We recall the definitions from the Introduction.
Let $w=a_1a_2\dots a_{2n}\in\Alt_{2n}$.  Following~\cite[\S3]{ASS},
set
\[
\hat w\;\coloneqq \;a_1a_3a_5\dots a_{2n-1}\;=\;b_1b_2\dots b_n,
\qquad b_i\coloneqq a_{2i-1}.
\]
An index $i\in[n]$ is a \emph{record position} of $\hat w$ if
$b_i>b_j$ for all $1\le j<i$, that is, if $b_i$ is a left-to-right
maximum of $\hat w$; in particular $1$ is always a record
position.  Let
$\rec(\hat w)=\{r_1<r_2<\dots<r_\ell\}$ (so $r_1=1$) be the set of
record positions and put $r_{\ell+1}\coloneqq n+1$.  The \emph{record
composition} of $\hat w$ is
\[
\rc(\hat w)\;\coloneqq \;(r_2-r_1,\;r_3-r_2,\;\dots,\;r_{\ell+1}-r_\ell)
\;\vDash\;n,
\]
and the \emph{record partition} is
$\rp(\hat w)\coloneqq \lambda(\rc(\hat w))$, as in~\cite{ASS}.
Equivalently, $\rc(\hat w)$ lists the lengths of the factors obtained
by cutting $\hat w$ immediately before each record position other
than the first.  Note that
$\rc(\hat w)$ determines $\rec(\hat w)$ and conversely: if
$\rc(\hat w)=\alpha$ then $r_j=s_{j-1}(\alpha)+1$ for $1\le
j\le\ell(\alpha)$.

We will use the following elementary observation about
running maxima.  Call $i$ a \emph{record position} of a sequence
$b_1,\dots,b_n$ of distinct numbers if $b_i>b_j$ for all $j<i$.  Then
for every $i\in[n]$,
\begin{equation}\label{eq:runmax}
  \max\{b_1,\dots,b_i\}\;=\;b_r,\qquad
  r=\max\{\,r'\le i : r'\text{ is a record position}\,\}
\end{equation}
(the set on the right is nonempty, as $1$ is always a record
position); this is immediate by induction on $i$, since the running
maximum changes exactly when a new record occurs.

\subsection{Symmetric functions}\label{subsec:sym}
Throughout,  $K$ denotes a fixed field of
characteristic zero (the reader may take $K=\Q$ or $K=\mathbb{R}$).
We follow~\cite[Ch.~7]{EC2} for symmetric function notation. Namely,
$\Sym_K$ is the $K$-algebra of symmetric functions in
$x=(x_1,x_2,\dots)$, with monomial basis $m_\lambda$, complete
homogeneous basis $h_\lambda$, and power sum basis $p_\lambda$.

Sprout sequences are understood to be over $K$, as in~\cite{ASS}.
Whenever $\{u_\gamma\}$ is a basis of a vector space (or the monomials
of a formal power series ring), we write $[u_\gamma]G$ for the
coefficient of $u_\gamma$ in the expansion of an element $G$.  Thus
$[p_\lambda]f$ denotes the coefficient of $p_\lambda$ in a symmetric
function $f$, $[t^n]F(t)$ the coefficient of $t^n$ in a power series
$F(t)$, and, in Section~\ref{sec:nsym}, $[\Psi^\alpha]F$ the
coefficient of $\Psi^\alpha$, once $\{\Psi^\alpha\}$ has been shown to
be a basis.
For a
sprout sequence $\mathfrak{R}=(R_0,R_1,\dots)$ with seed $F$,
write, as in~\cite[eq.\ (1.2)]{ASS},
\begin{equation}\label{eq:seedb}
  \log F(t)=\sum_{n\ge1}b_n\frac{t^n}{n}.
\end{equation}
Then \cite[Theorem~2.1(d)]{ASS} gives the power sum expansion
\begin{equation}\label{eq:pexp}
  R_n\;=\;\sum_{\lambda\vdash n} z_\lambda^{-1}\,b_\lambda\,p_\lambda,
  \qquad b_\lambda\coloneqq b_{\lambda_1}b_{\lambda_2}\dots.
\end{equation}
For the seed $F(t)=\sec(\sqrt t\,)$, take first the odd part of
Andr\'e's theorem~\eqref{eq:andre}. Since $\sec$ is even and $\tan$ is
odd, $\tan x=\sum_{m\ge1}E_{2m-1}\,x^{2m-1}/(2m-1)!$\,.  Hence, we have
\[
\frac{d}{dx}\log\sec x=\tan x=\sum_{m\ge1}E_{2m-1}\frac{x^{2m-1}}{(2m-1)!},
\]
and so we have
\[
\log\sec x=\sum_{m\ge1}E_{2m-1}\frac{x^{2m}}{(2m)!},
\]
integrating termwise (both sides vanish at $x=0$).  Substituting
$x=\sqrt t$ gives
\[ \log\sec(\sqrt t\,)=\sum_{m\ge1}\bigl(mE_{2m-1}/(2m)!\bigr)\,t^m/m. \]
Namely, the coefficients \eqref{eq:seedb} for this seed are the
numbers $b_k=kE_{2k-1}/(2k)!$ of~\eqref{eq:bdef}.  This computation is
\cite[proof of Theorem~4.1]{ASS}, where it is shown that
$(A_0,A_1,\dots)$, with
$(2n)!A_n=\sum_{\lambda\vdash n}|\varphi(\lambda)|\,p_\lambda$, is the
sprout sequence with seed $\sec(\sqrt t\,)$. Equivalently, with these $b_k$,
\[ A_n=\sum_\lambda z_\lambda^{-1}b_\lambda p_\lambda. \]

\section{Counting alternating permutations by record composition}
\label{sec:count}

The strategy of this section is to prove Theorem~\ref{thm:main} by a
bijection.  Rather than count the permutations
$w\in\Alt_{2n}$ with $\rc(\hat w)=\alpha$ directly, we describe an
auxiliary set $\mathcal{B}(\alpha)$ of ``assembly instructions'', an
ordered list of blocks together with an alternating word on each
block, and show (see Lemmas~\ref{lem:concat} and~\ref{lem:decomp}) that
concatenating the words is a bijection onto the permutations we want,
under which the record composition reads off the block sizes in
order.  Counting $\mathcal{B}(\alpha)$ then splits into two
independent tasks: choosing the blocks (see Lemma~\ref{lem:orderedpartitions})
and decorating each with a word (see Lemma~\ref{lem:blockwords}).  The
product of the two counts is the formula of Theorem~\ref{thm:main}.
Throughout the section we fix $n\ge1$ and
$\alpha=(\alpha_1,\dots,\alpha_\ell)\vDash n$, with partial sums
$s_j=s_j(\alpha)$.

The following definition packages the assembly instructions.  Condition
(a) records \emph{which} letters go into each block and insists the
block maxima increase---this canonical ordering is what will let us
recover the blocks from $w$ alone, while condition (b) is exactly the
decoration counted by Lemma~\ref{lem:blockwords}.

\begin{definition}\label{def:blockdata}
  Let $\mathcal{B}(\alpha)$ denote the set of pairs
  $\bigl((B_1,\dots,B_\ell),(u^1,\dots,u^\ell)\bigr)$ where
  \begin{enumerate}
    \item[(a)] $(B_1,\dots,B_\ell)$ is an ordered set partition of $[2n]$ with
    $\#B_j=2\alpha_j$ and
    \[
    \max B_1<\max B_2<\dots<\max B_\ell,
    \]
    \item[(b)] for each $j$, $u^j$ is an alternating word on $B_j$ whose
    first letter is $\max B_j$.
  \end{enumerate}
  We call elements of $\mathcal{B}(\alpha)$ \emph{block data of type
  $\alpha$}, and define the \emph{concatenation map}
  \[
  \kappa\colon\mathcal{B}(\alpha)\longrightarrow
  \{\text{words on }[2n]\},\qquad
  \kappa\bigl((B_j),(u^j)\bigr)\coloneqq u^1u^2\dots u^\ell.
  \]
\end{definition}

Note that if $\bigl((B_j),(u^j)\bigr)\in\mathcal{B}(\alpha)$, then the
letters of the block $B_j$ occupy the positions
$2s_{j-1}+1,\,2s_{j-1}+2,\,\dots,\,2s_j$ of the concatenation
$w=\kappa\bigl((B_j),(u^j)\bigr)$; in particular each block begins at
an odd position and ends at an even position.

\begin{example}\label{ex:blockdata}
  Let $\alpha=(2,1,2)\vDash 5$, so we seek block data on $[10]$ with
  block sizes $2\alpha_j=4,2,4$.  One valid element of
  $\mathcal{B}(2,1,2)$ is
  \[
  (B_1,B_2,B_3)=\bigl(\{2,4,5,7\},\{3,8\},\{1,6,9,10\}\bigr),
  \qquad
  (u^1,u^2,u^3)=(7\,2\,5\,4,\;8\,3,\;10\,6\,9\,1).
  \]
  Indeed, the block maxima $7<8<10$ increase; each $u^j$ is an
  alternating word beginning with its block's maximum
  ($7\,2\,5\,4$ has $7>2<5>4$, and so on).  Concatenating gives
  $\kappa=7\,2\,5\,4\,8\,3\,10\,6\,9\,1$, which is exactly the
  permutation $w$ of the example in the Introduction, with $\rc(\hat w)=(2,1,2)$.
  Lemma~\ref{lem:concat} shows this always happens, and
  Lemma~\ref{lem:decomp} shows every such $w$ arises from a unique piece
  of block data.
\end{example}

\begin{lemma}\label{lem:concat}
  For every $\bigl((B_j),(u^j)\bigr)\in\mathcal{B}(\alpha)$, the word
  $w\coloneqq \kappa\bigl((B_j),(u^j)\bigr)$ is an alternating permutation of
  $[2n]$ and $\rc(\hat w)=\alpha$.
\end{lemma}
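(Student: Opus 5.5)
We have to verify two things: that $w$ is an alternating permutation of $[2n]$, and that its odd-indexed subword has records exactly at the starts of the blocks. Since $(B_1,\dots,B_\ell)$ is an ordered set partition of $[2n]$, $w$ lists each element of $[2n]$ exactly once, so $w$ is a permutation. By the remark after Definition~\ref{def:blockdata}, block $B_j$ occupies positions $2s_{j-1}+1,\dots,2s_j$. Hence every block starts at an odd position and ends at an even position.

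\emph{Alternation.} We check the inequality $a_i\gtrless a_{i+1}$ for each $i$. If positions $i,i+1$ lie in the same block, the inequality required of $w$ at $i$ agrees with the one required of $u^j$ at its internal index $i-2s_{j-1}$. This is because $2s_{j-1}$ is even, so parity is preserved, and the inequality then holds because $u^j$ is alternating. The only other case is a boundary $i=2s_j$ with $1\le j<\ell$. Here $i$ is even, so we need $a_{2s_j}<a_{2s_j+1}$. Now $a_{2s_j+1}=\max B_{j+1}$ is the first letter of $u^{j+1}$, and $a_{2s_j}\in B_j$. Therefore
\[
a_{2s_j}\le\max B_j<\max B_{j+1}=a_{2s_j+1}.
\]

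\emph{Records.} Write $\hat w=b_1\dots b_n$ with $b_i=a_{2i-1}$. The entries $b_{s_{j-1}+1},\dots,b_{s_j}$ are the odd-position letters of $u^j$, and all of them lie in $B_j$. The first of these, $b_{s_{j-1}+1}$, equals $\max B_j$. We show the record positions are exactly $s_{j-1}+1$ for $j=1,\dots,\ell$.

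First, take $i=s_{j-1}+1$. Every earlier entry $b_k$ lies in some $B_{j'}$ with $j'<j$, so
\[
b_k\le\max B_{j'}<\max B_j=b_i,
\]
and $i$ is a record.

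Next, take $i$ with $s_{j-1}+1<i\le s_j$. Then $b_i\in B_j\setminus\{\max B_j\}$, because $\max B_j$ already appears at position $s_{j-1}+1$ and letters are distinct. So $b_i<\max B_j=b_{s_{j-1}+1}$, an earlier entry, and $i$ is not a record.

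Thus $\rec(\hat w)=\{s_0+1,\dots,s_{\ell-1}+1\}$, and by definition $\rc(\hat w)=(s_1-s_0,\dots,s_\ell-s_{\ell-1})=\alpha$.

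The only step needing care is the boundary inequality in the alternation check. It relies on two facts: block boundaries fall at even positions, where $w$ needs an ascent, and the increasing-maxima condition (a) supplies exactly that ascent. Everything else is bookkeeping of positions.
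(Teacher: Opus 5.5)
Your proposal is correct and follows essentially the same route as the paper: parity preservation within blocks, the increasing-maxima inequality $a_{2s_j}\le\max B_j<\max B_{j+1}=a_{2s_j+1}$ at the even boundary positions, and the same two-part identification of the record positions as exactly the block starts $s_{j-1}+1$. Nothing is missing.
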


\begin{proof}
  That $w$ is a permutation of $[2n]$ is clear.  Write
  $p_j\coloneqq \max B_j$, so $p_1<\dots<p_\ell$.
  \smallskip

  \noindent
  \emph{$w$ is alternating}  Let $1\le p\le 2n-1$; we must show
  $w_p>w_{p+1}$ if $p$ is odd and $w_p<w_{p+1}$ if $p$ is even.  If
  positions $p$ and $p+1$ lie in the same block $B_j$, put
  $q\coloneqq p-2s_{j-1}$, so $1\le q\le 2\alpha_j-1$ and
  $(w_p,w_{p+1})=(u^j_q,u^j_{q+1})$.  Since $2s_{j-1}$ is even, $q$ and
  $p$ have the same parity, and the required inequality is exactly the
  alternating condition for $u^j$ at index $q$.  Otherwise $p$ is the
  last position of some block, so $p=2s_j$ with $1\le j\le \ell-1$ (by
  the note following Definition~\ref{def:blockdata}, and since
  $p<2n=2s_\ell$); this $p$ is even, and indeed
  $w_p\in B_j$ gives $w_p\le p_j<p_{j+1}=w_{p+1}$, using that
  $u^{j+1}$ begins with $p_{j+1}$.

  \smallskip
  \noindent
  \emph{Record positions.}  The odd positions of $w$ inside block
  $B_j$ are $2s_{j-1}+1,2s_{j-1}+3,\dots,2s_j-1$; hence the letters
  $\hat w_i$ for $s_{j-1}+1\le i\le s_j$ are precisely the letters of
  $u^j$ in odd positions, and in particular
  $\hat w_{s_{j-1}+1}=u^j_1=p_j$.  We claim
  $\rec(\hat w)=\{s_0+1,s_1+1,\dots,s_{\ell-1}+1\}$.  First,
  $s_{j-1}+1$ is a record position for every $j$: this is trivial for
  $j=1$, and for $j\ge2$ every letter of $\hat w$ strictly preceding
  position $s_{j-1}+1$ lies in $B_1\cup\dots\cup B_{j-1}$, hence is at
  most $\max(p_1,\dots,p_{j-1})=p_{j-1}<p_j=\hat w_{s_{j-1}+1}$.
  Second, if $s_{j-1}+1<i\le s_j$, then
  $\hat w_i\in B_j\setminus\{p_j\}$, so
  $\hat w_i<p_j=\hat w_{s_{j-1}+1}$ and $i$ is not a record position.
  Consequently, we have that
  $\rc(\hat w)=(s_1-s_0,\,s_2-s_1,\,\dots,\,s_\ell-s_{\ell-1})=\alpha$.
\end{proof}

\begin{lemma}\label{lem:decomp}
  The map $\kappa$ is a bijection from $\mathcal{B}(\alpha)$ onto
  $\{w\in\Alt_{2n}:\rc(\hat w)=\alpha\}$.
\end{lemma}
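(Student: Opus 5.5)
By Lemma~\ref{lem:concat}, $\kappa$ maps $\mathcal{B}(\alpha)$ into the target set $\{w\in\Alt_{2n}:\rc(\hat w)=\alpha\}$. It remains to prove injectivity and surjectivity. We do both at once by constructing an explicit inverse: from $w$ alone we recover the unique block data that concatenates to $w$.

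Fix $w=a_1\dots a_{2n}\in\Alt_{2n}$ with $\rc(\hat w)=\alpha$. Cut $w$ into consecutive factors of lengths $2\alpha_1,\dots,2\alpha_\ell$. That is, $u^j\coloneqq a_{2s_{j-1}+1}a_{2s_{j-1}+2}\dots a_{2s_j}$, and $B_j$ is its set of letters.

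\emph{Injectivity.} For any block data, the positions occupied by $B_j$ in $\kappa$ are forced to be $2s_{j-1}+1,\dots,2s_j$ (the note after Definition~\ref{def:blockdata}). Hence $\kappa\bigl((B_j),(u^j)\bigr)=w$ forces $u^j$ to be exactly the $j$-th factor above, and $B_j$ to be its letter set. So the preimage has at most one element.

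\emph{Surjectivity.} We must check that this cut of $w$ satisfies (a) and (b).

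\emph{Each $u^j$ is alternating.} Each factor starts at the odd position $2s_{j-1}+1$, so its internal parity pattern matches that of $w$. The alternating inequalities of $w$ therefore restrict to those of $u^j$.

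\emph{Each $u^j$ begins with $\max B_j$.} Since $\rc(\hat w)=\alpha$, the record positions of $\hat w$ are exactly $s_{j-1}+1$ for $j=1,\dots,\ell$. Set $p_j\coloneqq a_{2s_{j-1}+1}=\hat w_{s_{j-1}+1}$. Because each $s_{j-1}+1$ is a record, $p_1<\dots<p_\ell$. By \eqref{eq:runmax}, every odd-position letter of block $j$ (namely $\hat w_i$ with $s_{j-1}+1\le i\le s_j$) is at most $p_j$.

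For an even-position letter $a_{2i}$ with $2s_{j-1}+2\le 2i\le 2s_j$, alternation gives $a_{2i}<a_{2i-1}$. Here $2i-1$ is an odd position in the same block, so $a_{2i-1}\le p_j$. Thus every letter of $B_j$ is at most $p_j$, i.e.\ $p_j=\max B_j$. This establishes (b), and the increasing maxima give (a).

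The sizes $\#B_j=2\alpha_j$ and the fact that $(B_1,\dots,B_\ell)$ partitions $[2n]$ are immediate. Concatenating the $u^j$ returns $w$, so $\kappa$ is surjective.

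The main point needing care is showing that the first letter of each factor is the block maximum. This is where the record hypothesis enters. Records alone control only the odd-position letters through the running maximum; the even-position letters are handled by the descent $a_{2i-1}>a_{2i}$, which must stay inside the block. That holds because each block ends at an even position.
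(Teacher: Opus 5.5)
Your proof is correct and follows the paper's argument essentially step for step: you cut $w$ at positions $2s_{j-1}+1$, transfer alternation by parity, bound the odd-position letters via \eqref{eq:runmax}, and bound the even-position letters via the descent $a_{2i-1}>a_{2i}$ inside the block. The only small difference is injectivity, which you get directly from the fact that the block positions are fixed by $\alpha$; the paper routes it through the record positions of $\widehat{\kappa(\cdot)}$ from Lemma~\ref{lem:concat}, and both arguments are valid.
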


\begin{proof}
  By Lemma~\ref{lem:concat}, $\kappa$ maps $\mathcal{B}(\alpha)$ into
  the displayed set.  We construct the inverse.  Let $w\in\Alt_{2n}$
  with $\rc(\hat w)=\alpha$; as noted in
  Section~\ref{subsec:records}, its record positions are then
  $r_j=s_{j-1}+1$ for $1\le j\le\ell$, and we again put
  $r_{\ell+1}=n+1$.  Define, for $1\le j\le \ell$,
  \[
  u^j\;\coloneqq \;w_{2r_j-1}\,w_{2r_j}\dots w_{2r_{j+1}-2},
  \qquad B_j\;\coloneqq \;\{\text{letters of }u^j\},
  \]
  so $\#B_j=2(r_{j+1}-r_j)=2\alpha_j$; since the factors
  $u^1,\dots,u^\ell$ occupy the consecutive position ranges
  $[2r_j-1,\,2r_{j+1}-2]$, whose union is $[1,2n]$, the sets
  $(B_1,\dots,B_\ell)$ form an ordered set partition of $[2n]$.  We
  check that
  $\bigl((B_j),(u^j)\bigr)\in\mathcal{B}(\alpha)$.

  \smallskip
  \noindent
  \emph{Each $u^j$ is alternating.}  The $q$-th letter of $u^j$ is
  $w_p$ with $p=2r_j-2+q$; since $2r_j-2$ is even, $p$ and $q$ have the
  same parity, and the alternating inequalities for $u^j$ are inherited
  from those of $w$.

  \smallskip
  \noindent
  \emph{The first letter of $u^j$ is $\max B_j$, and the maxima
  increase.}  The first letter of $u^j$ is $w_{2r_j-1}=\hat w_{r_j}$,
  the $j$-th record value; write $p_j\coloneqq \hat w_{r_j}$, so
  $p_1<p_2<\dots<p_\ell$ since record values strictly increase.  The
  letters of $B_j$ at odd positions of $w$ are
  $\hat w_{r_j},\hat w_{r_j+1},\dots,\hat w_{r_{j+1}-1}$; for
  $r_j<i<r_{j+1}$ the index $i$ is not a record position, so by
  \eqref{eq:runmax} (the largest record position $\le i-1$ being $r_j$)
  we get $\hat w_i<\max\{\hat w_1,\dots,\hat w_{i-1}\}=\hat w_{r_j}=p_j$.
  The letters of $B_j$ at even positions of $w$ are $w_{2i}$ for
  $r_j\le i\le r_{j+1}-1$, and since $w$ is alternating,
  $w_{2i}<w_{2i-1}=\hat w_i\le p_j$.  Hence $\max B_j=p_j$, attained at
  the first letter of $u^j$, and $\max B_1<\dots<\max B_\ell$.

  Thus, $\bigl((B_j),(u^j)\bigr)\in\mathcal{B}(\alpha)$, and clearly
  $\kappa\bigl((B_j),(u^j)\bigr)=w$; so $\kappa$ is surjective onto the
  displayed set.  For injectivity, note that the proof of
  Lemma~\ref{lem:concat} shows that for any block data of type $\alpha$,
  the record positions of $\widehat{\kappa(\cdot)}$ occur exactly at the
  block starts $s_{j-1}+1$; hence the cut points
  $2r_j-1=2s_{j-1}+1$, and with them the words $u^j$ and sets $B_j$, are
  recovered from $w=\kappa\bigl((B_j),(u^j)\bigr)$ by the construction
  above.  The two maps are therefore mutually inverse.
\end{proof}

It remains to count $\mathcal{B}(\alpha)$.  We state the counting
lemma for ordered set partitions in the generality needed again in
Section~\ref{sec:general}.

\begin{lemma}\label{lem:orderedpartitions}
  Let $X$ be a finite set of positive integers, let $\ell\ge1$, and let
  $m_1,\dots,m_\ell\ge1$ satisfy $m_1+\dots+m_\ell=\#X$.  The number of
  ordered set partitions $(B_1,\dots,B_\ell)$ of $X$ with $\#B_j=m_j$
  for all $j$ and $\max B_1<\max B_2<\dots<\max B_\ell$ equals
  \[
  \prod_{j=1}^{\ell}\binom{m_1+\dots+m_j-1}{m_j-1}.
  \]
\end{lemma}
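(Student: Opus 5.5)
I will argue by induction on $\ell$, peeling off the \emph{last} block. The key observation is that the condition $\max B_1<\dots<\max B_\ell$ forces $\max X\in B_\ell$, since $\max X$ lies in some block, and that block's maximum is $\max X$, which is the largest of the block maxima. For $\ell=1$ there is exactly one partition, namely $B_1=X$. This matches $\binom{m_1-1}{m_1-1}=1$, since $m_1=\#X$.

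For the inductive step, let $M:=m_1+\dots+m_\ell=\#X$. A valid partition is determined by two choices.
\begin{enumerate}
  \item[(i)] The block $B_\ell$. It must contain $\max X$ together with $m_\ell-1$ further elements chosen from the $M-1$ remaining ones, giving $\binom{M-1}{m_\ell-1}$ choices.
  \item[(ii)] A partition $(B_1,\dots,B_{\ell-1})$ of $X':=X\setminus B_\ell$ with sizes $m_1,\dots,m_{\ell-1}$ and increasing maxima.
\end{enumerate}

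We must check that these choices are independent and reassemble correctly. Given any $B_\ell\ni\max X$ and any valid partition of $X'$, the resulting $\ell$-tuple is a valid partition of $X$. Indeed, $\max B_{\ell-1}\le\max X'<\max X=\max B_\ell$, because $\max X\notin X'$. Conversely, every valid partition of $X$ arises uniquely in this way, by the observation above. The set $X'$ has $m_1+\dots+m_{\ell-1}$ elements, so the inductive hypothesis, which holds for an arbitrary finite set $X'$, gives $\prod_{j=1}^{\ell-1}\binom{s_j-1}{m_j-1}$ choices in (ii). Here $s_j=m_1+\dots+m_j$, and this count does not depend on which $B_\ell$ was chosen. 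Multiplying by $\binom{s_\ell-1}{m_\ell-1}$ from (i) yields the claimed product.

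There is no real obstacle. The only point requiring care is that the induction be stated for arbitrary finite sets, so that it applies to $X'$, and that the count in (ii) be uniform in the choice of $B_\ell$. Both of these are handled above.
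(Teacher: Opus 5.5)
Your proposal is correct and follows the paper's proof essentially step for step. Both argue by induction on $\ell$, force $\max X\in B_\ell$, choose the remaining $m_\ell-1$ elements of $B_\ell$ in $\binom{\#X-1}{m_\ell-1}$ ways, and recurse on $X\setminus B_\ell$, where the constraint $\max B_{\ell-1}<\max B_\ell$ is automatic. The only difference is that you state the induction for arbitrary finite sets, whereas the paper first notes that the count depends only on $\#X$; these serve the same purpose.
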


\begin{proof}
  The count depends only on $\#X$, since replacing $X$ by an
  order-isomorphic set induces a bijection on such ordered partitions.
  We induct on $\ell$.  If $\ell=1$ the count is $1=\binom{m_1-1}{m_1-1}$,
  as $B_1=X$ is forced.  For $\ell\ge2$: the element $\max X$ lies in
  some block $B_j$, and then $\max B_j=\max X\ge\max B_\ell$; since the
  block maxima strictly increase, $j=\ell$.  Thus $B_\ell$ consists of
  $\max X$ together with any $(m_\ell-1)$-element subset of
  $X\setminus\{\max X\}$, chosen in $\binom{\#X-1}{m_\ell-1}$ ways, and
  $(B_1,\dots,B_{\ell-1})$ is an arbitrary ordered set partition of the
  $(m_1+\dots+m_{\ell-1})$-element set $X\setminus B_\ell$ with block
  sizes $m_1,\dots,m_{\ell-1}$ and increasing maxima; the constraint
  $\max B_{\ell-1}<\max B_\ell=\max X$ is automatic.  The claim follows
  by induction, since $\#X-1 = m_1+\dots+m_\ell-1$.
\end{proof}

\begin{example}\label{ex:opcount}
  For $\alpha=(2,1,2)$ (block sizes $m_j=2\alpha_j=4,2,4$ on $X=[10]$),
  Lemma~\ref{lem:orderedpartitions} gives
  $\binom{3}{3}\binom{5}{1}\binom{9}{3}=1\cdot5\cdot84=420$ ordered set
  partitions with increasing maxima.  Decorating each block by an
  alternating word beginning with its maximum contributes, by
  Lemma~\ref{lem:blockwords}, a further
  $E_3\cdot E_1\cdot E_3=2\cdot1\cdot2=4$ choices, for a total of
  $420\cdot4=1680=N(2,1,2)$, matching the direct evaluation after
  Theorem~\ref{thm:main}.
\end{example}

\begin{proof}[Proof of Theorem~\ref{thm:main}]
  By Lemma~\ref{lem:decomp},
  $N(\alpha)=\#\mathcal{B}(\alpha)$.  Block data of type $\alpha$ are
  built by first choosing the ordered set partition
  $(B_1,\dots,B_\ell)$ of $[2n]$ with $\#B_j=2\alpha_j$ and increasing
  maxima, in
  $\prod_{j}\binom{2\alpha_1+\dots+2\alpha_j-1}{2\alpha_j-1}
  =\prod_j\binom{2s_j-1}{2\alpha_j-1}$
  ways (Lemma~\ref{lem:orderedpartitions} with $m_j=2\alpha_j$), and
  then choosing, independently for each $j$, an alternating word on
  $B_j$ beginning with $\max B_j$, in $E_{2\alpha_j-1}$ ways
  (Lemma~\ref{lem:blockwords}).
\end{proof}

\begin{remark}\label{rem:positive}
  Every factor in Theorem~\ref{thm:main} is a positive integer, so
  $N(\alpha)>0$ for every $\alpha\vDash n$: the record composition
  statistic is surjective onto compositions of $n$.
\end{remark}

\section{A product formula and the record partition theorem}\label{sec:product}
\subsection{Deducing Theorem~\ref{thm:bform} from Theorem~\ref{thm:main}}
Theorem~\ref{thm:main} expresses $N(\alpha)$ through binomial
coefficients and Euler numbers.  In this section we recast it in the
multiplicative shape $N(\alpha)=(2n)!\,b_\alpha/\piu(\alpha)$
(Theorem~\ref{thm:bform}), where the $b_k$ are the seed coefficients
of $\sec(\sqrt t\,)$.  This is the form that will match the
noncommutative expansion in Section~\ref{sec:nsym}, and it also lets
us recover the original partition-indexed count of~\cite{ASS} by
summing over rearrangements (Corollary~\ref{cor:recover}).  The bridge
between the two shapes is the following elementary identity, used here
with $d=2$ and again with $d=1$ in Section~\ref{sec:general}.

\begin{lemma}\label{lem:telescope} If $d\ge1$ and $\alpha=(\alpha_1,\dots,\alpha_\ell)\vDash n$ with
  partial sums $s_j$, then we have
  \[
  \prod_{j=1}^{\ell}\binom{ds_j-1}{d\alpha_j-1}
  \;=\;(dn)!\;\prod_{j=1}^{\ell}\frac{1}{(ds_j)\,(d\alpha_j-1)!}\,.
  \]
\end{lemma}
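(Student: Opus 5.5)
The plan is to expand each binomial coefficient into factorials and let the product telescope. Since $s_j-\alpha_j=s_{j-1}$, we have, for each $j$,
\[
\binom{ds_j-1}{d\alpha_j-1}=\frac{(ds_j-1)!}{(d\alpha_j-1)!\,(ds_{j-1})!}
=\frac{(ds_j)!}{(ds_j)\,(d\alpha_j-1)!\,(ds_{j-1})!},
\]
where the second equality uses $(ds_j-1)!=(ds_j)!/(ds_j)$, which is valid because $ds_j\ge d\ge1$.

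Next I would take the product over $j=1,\dots,\ell$. The ratios $(ds_j)!/(ds_{j-1})!$ telescope to $(ds_\ell)!/(ds_0)!=(dn)!/0!=(dn)!$. The remaining factors $1/\bigl((ds_j)(d\alpha_j-1)!\bigr)$ are exactly the product on the right-hand side, which gives the claimed identity.

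There is no real obstacle here. The only points to check are that the binomial coefficients are in range, namely $0\le d\alpha_j-1\le ds_j-1$, and that the difference of the top and bottom arguments is $ds_{j-1}$. Both follow directly from $\alpha_j\ge1$ and $s_j=s_{j-1}+\alpha_j$. Alternatively, one could give the same argument as a formal induction on $\ell$, peeling off the last factor; the base case $\ell=1$ reads $\binom{dn-1}{dn-1}=1=(dn)!/\bigl((dn)(dn-1)!\bigr)$.
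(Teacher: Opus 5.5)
Your proof is correct and is essentially the paper's argument: you expand each binomial using $ds_j-d\alpha_j=ds_{j-1}$, rewrite $(ds_j-1)!=(ds_j)!/(ds_j)$, and telescope $\prod_j (ds_j)!/(ds_{j-1})!$ to $(dn)!/0!=(dn)!$. Your explicit check that the binomial arguments are in range is a small addition that the paper leaves implicit.
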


\begin{proof}
  Since $ds_j-d\alpha_j=ds_{j-1}$, each factor equals
  $\binom{ds_j-1}{d\alpha_j-1}
  =\dfrac{(ds_j-1)!}{(d\alpha_j-1)!\,(ds_{j-1})!}$. Splitting and using
  $(ds_j-1)!=(ds_j)!/(ds_j)$ we get
  \[
  \prod_{j=1}^{\ell}\frac{(ds_j-1)!}{(ds_{j-1})!}
  =\prod_{j=1}^{\ell}\frac{(ds_j)!}{(ds_j)\,(ds_{j-1})!}
  =\frac{(ds_\ell)!}{(ds_0)!}\prod_{j=1}^{\ell}\frac{1}{ds_j}
  =(dn)!\prod_{j=1}^{\ell}\frac{1}{ds_j},
  \]
  the middle equality by telescoping ($s_0=0$, $s_\ell=n$, $0!=1$).
  Multiplying by $\prod_j 1/(d\alpha_j-1)!$ gives the claim.
\end{proof}

\begin{proof}[Proof of Theorem~\ref{thm:bform}]
  By Theorem~\ref{thm:main} and Lemma~\ref{lem:telescope} with $d=2$,
  \[
  N(\alpha)
  =(2n)!\prod_{j=1}^{\ell}\frac{E_{2\alpha_j-1}}{(2s_j)\,(2\alpha_j-1)!}.
  \]
  On the other hand, by~\eqref{eq:bdef} and
  $(2\alpha_j)!=(2\alpha_j)(2\alpha_j-1)!$,
  \[
  \frac{b_{\alpha_j}}{s_j}
  =\frac{\alpha_j\,E_{2\alpha_j-1}}{(2\alpha_j)!\;s_j}
  =\frac{E_{2\alpha_j-1}}{2\,(2\alpha_j-1)!\;s_j}
  =\frac{E_{2\alpha_j-1}}{(2s_j)\,(2\alpha_j-1)!}\,,
  \]
  so the two products agree factor by factor.  Finally
  $\prod_j s_j=\piu(\alpha)$ by definition.
\end{proof}

\subsection{Deducing \eqref{eq:ass32} from Theorem~\ref{thm:main}}
We now show that summing over rearrangements recovers
\eqref{eq:ass32}.  The key is the following identity.

\begin{lemma}\label{lem:zlambda}
  For every partition $\lambda\vdash n$, we have
  \[
  \sum_{\alpha\sim\lambda}\frac{1}{\piu(\alpha)}
  \;=\;\frac{1}{z_\lambda},
  \]
  the sum being over all distinct rearrangements $\alpha$ of $\lambda$.
\end{lemma}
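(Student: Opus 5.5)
The plan is to prove the identity by a probabilistic (uniform random permutation) argument, which avoids any explicit manipulation of multinomial sums.

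Fix $\lambda\vdash n$ with parts $\lambda_1,\dots,\lambda_\ell$. Partition $[n]$ into $\ell$ labeled cells of sizes $\lambda_1,\dots,\lambda_\ell$. Take a uniformly random linear order (permutation) of $[n]$. Each cell $C$ has a maximum element under this order. Reading the cells in increasing order of their maxima gives a sequence of cell sizes, which is a rearrangement $\alpha$ of $\lambda$.

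The probability of a given ordering of the $\ell$ labeled cells by increasing maxima is computed as in Lemma~\ref{lem:orderedpartitions}, peeling from the top. The overall maximum lies in the last cell with probability $\alpha_\ell/s_\ell$. Conditioned on this, the maximum of the remaining $s_{\ell-1}$ elements lies in the next cell with probability $\alpha_{\ell-1}/s_{\ell-1}$, and so on. Hence a specific labeled ordering has probability $\prod_j \alpha_j/s_j = \prod_k k^{m_k}/\piu(\alpha)$.

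Next we sum over labeled orderings. Each distinct composition $\alpha\sim\lambda$ arises from exactly $\prod_k m_k!$ labeled orderings, obtained by permuting equal-size cells. The total probability is $1$, so
\[
\sum_{\alpha\sim\lambda}\frac{\prod_k m_k!\,k^{m_k}}{\piu(\alpha)}=1,
\]
which is exactly $\sum_{\alpha\sim\lambda}1/\piu(\alpha)=1/z_\lambda$.

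The only delicate step is the conditional independence in the peeling argument. The point to check is that, conditioned on the top $s_\ell-s_{j}$ elements being the union of the last cells in the prescribed order, the relative order on the remaining elements is still uniform. This holds because the conditioning event depends only on which elements are largest, not on how the rest are ordered.

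If one prefers to avoid probability, the same argument can be phrased as a count of linear orders: multiply through by $n!$ and use Lemma~\ref{lem:orderedpartitions} together with Lemma~\ref{lem:telescope} (with $d=1$). Counting ordered set partitions of $[n]$ into labeled blocks of the prescribed sizes with increasing maxima, summed over all orderings, must give the multinomial $n!/\prod_j\lambda_j!$. Each term equals $n!/\bigl(\piu(\alpha)\prod_j(\alpha_j-1)!\bigr)$ by Lemma~\ref{lem:telescope}, and the factor $\prod m_k!$ again accounts for the labels. This yields the identity after dividing through.
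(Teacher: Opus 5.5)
Your proof is correct, but it takes a different route from the paper. The paper argues algebraically, by induction on $\ell(\lambda)$. Grouping rearrangements by their last part $k$ gives $f(\lambda)=\frac1n\sum_k f(\lambda\setminus k)$, and the identities $z_{\lambda\setminus k}=z_\lambda/(k\,m_k(\lambda))$ and $\sum_k k\,m_k(\lambda)=n$ close the induction.

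You instead interpret $z_\lambda/\piu(\alpha)$ as the probability that a uniformly random assignment of $[n]$ to labeled cells of sizes $\lambda_1,\dots,\lambda_\ell$ has max-ordered type $\alpha$. These probabilities must sum to $1$. The paper's inductive step is exactly your first peeling step, since the overall maximum falls in a cell of size $k$ with probability $k\,m_k/n$.

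Your counting formulation is fully rigorous and not circular. It is Lemma~\ref{lem:orderedpartitions} plus Lemma~\ref{lem:telescope} with $d=1$, summed over $\alpha\sim\lambda$ and compared with the multinomial $n!/\prod_j\lambda_j!$. Dropping labels, you can compare instead with the number $n!/(\prod_j\lambda_j!\prod_k m_k!)$ of set partitions of $[n]$ of type $\lambda$.

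What your approach buys is a combinatorial meaning for the identity, built from the same machinery as Theorem~\ref{thm:main} and Proposition~\ref{prop:expseed}. What the paper's induction buys is brevity, independence from any counting lemma, and a visible parallel with the proof of Proposition~\ref{prop:gkllrt}.

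One point in your probabilistic justification is misstated. The conditioning event at stage $j$ is not that the top $s_\ell-s_j$ elements make up the last cells. For $\lambda=(2,1)$ and $\alpha=(1,2)$, that event has probability $1/3$, whereas this ordering actually occurs with probability $2/3$.

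Write $C_{\sigma(1)},\dots,C_{\sigma(\ell)}$ for the cells in the prescribed order. The correct event is that for every $j'>j$ the maximum of $C_{\sigma(1)}\cup\dots\cup C_{\sigma(j')}$ lies in $C_{\sigma(j')}$. This event is invariant under permuting the ranks of the elements of $C_{\sigma(1)}\cup\dots\cup C_{\sigma(j)}$ among themselves, which is why their relative order stays uniform under the conditioning. Your counting version avoids the issue altogether, since Lemma~\ref{lem:orderedpartitions} does the peeling deterministically.
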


\begin{proof}
  Induct on $\ell(\lambda)$, the case $\ell=0$ (empty partition,
  $z_\varnothing=1$) being trivial.  Let $\ell\ge1$ and write
  $f(\lambda)$ for the left-hand side.  Group the rearrangements
  $\alpha$ of $\lambda$ according to the value $k$ of the last part
  $\alpha_\ell$: the possible values are the distinct parts of
  $\lambda$, and deleting the last part gives a bijection between
  rearrangements of $\lambda$ with last part $k$ and rearrangements of
  the partition $\lambda\setminus k$ obtained from $\lambda$ by removing
  one part equal to $k$.  Since $s_\ell(\alpha)=n$ for all
  $\alpha\sim\lambda$, and $\piu(\alpha)=n\cdot\piu(\alpha')$ where
  $\alpha'$ is $\alpha$ with the last part deleted, we have
  \[
  f(\lambda)=\frac1n\sum_{k}f(\lambda\setminus k)
  =\frac1n\sum_{k}\frac{1}{z_{\lambda\setminus k}}
  =\frac1n\sum_{k}\frac{k\,m_k(\lambda)}{z_\lambda}
  =\frac{1}{z_\lambda}\cdot\frac1n\sum_{k}k\,m_k(\lambda)
  =\frac{1}{z_\lambda},
  \]
  where $k$ runs over the distinct parts of $\lambda$, the second
  equality is the inductive hypothesis, the third is the computation
  $z_{\lambda\setminus k}=z_\lambda/(k\,m_k(\lambda))$ (removing one
  part equal to $k$ replaces the factor $k^{m_k}m_k!$ of $z_\lambda$ by
  $k^{m_k-1}(m_k-1)!$), and
  the last uses $\sum_k k\,m_k(\lambda)=n$.
\end{proof}

\begin{remark}
  Lemma~\ref{lem:zlambda} is classical.  For instance, it also follows
  by comparing the coefficient of $p_\lambda$ in the classical expansion
  $h_n=\sum_{\mu\vdash n}z_\mu^{-1}p_\mu$ (obtained by exponentiating
  the identity $H'(t)/H(t)=\sum_{k\ge1}p_kt^{k-1}$ derived in the proof
  of Lemma~\ref{lem:chi} below) with the image, under the projection
  $\chi$ of Section~\ref{sec:nsym}, of the expansion of $S_n$ in
  Proposition~\ref{prop:gkllrt} below.  We included the direct proof to
  keep the treatment self-contained.
\end{remark}

\begin{corollary}\label{cor:recover}
  Theorem~\ref{thm:main} implies \eqref{eq:ass32}. Namely, for every
  $\lambda\vdash n$, we have
  \[
  \sum_{\alpha\sim\lambda}N(\alpha)\;=\;(2n)!\,\frac{b_\lambda}{z_\lambda}
  \;=\;|\varphi(\lambda)|
  \;=\;\#\{w\in\Alt_{2n}:\rp(\hat w)=\lambda\}.
  \]
\end{corollary}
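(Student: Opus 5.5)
The proof chains together three earlier results: Theorem~\ref{thm:bform}, Lemma~\ref{lem:zlambda}, and the definition \eqref{eq:phiintro} of $|\varphi(\lambda)|$.

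For the first equality, I apply Theorem~\ref{thm:bform} to each rearrangement $\alpha\sim\lambda$, which gives $N(\alpha)=(2n)!\,b_\alpha/\piu(\alpha)$. The product $b_\alpha=b_{\alpha_1}\cdots b_{\alpha_\ell}$ depends only on the multiset of parts, so $b_\alpha=b_\lambda$. Factoring it out of the sum leaves $(2n)!\,b_\lambda\sum_{\alpha\sim\lambda}1/\piu(\alpha)$. Lemma~\ref{lem:zlambda} evaluates the remaining sum as $1/z_\lambda$, which gives $(2n)!\,b_\lambda/z_\lambda$.

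For the second equality, I substitute $b_k=kE_{2k-1}/(2k)!$ and $z_\lambda=\prod_k k^{m_k}m_k!$. The factors $k^{m_k}$ in $b_\lambda=\prod_k b_k^{m_k}$ cancel exactly against $k^{m_k}$ in $z_\lambda$. What remains is
\[
(2n)!\prod_k \frac{1}{m_k!}\Bigl(\frac{E_{2k-1}}{(2k)!}\Bigr)^{m_k},
\]
which is $|\varphi(\lambda)|$ by \eqref{eq:phiintro}.

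For the third equality, I use that a permutation $w\in\Alt_{2n}$ has $\rp(\hat w)=\lambda$ exactly when $\rc(\hat w)\sim\lambda$. The set $\{w\in\Alt_{2n}:\rp(\hat w)=\lambda\}$ is therefore the disjoint union, over distinct rearrangements $\alpha\sim\lambda$, of the sets counted by $N(\alpha)$. So its cardinality is $\sum_{\alpha\sim\lambda}N(\alpha)$. Combining the three equalities gives \eqref{eq:ass32} as a consequence of Theorem~\ref{thm:main}.

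None of these steps is a real obstacle. The only point needing care is to sum over \emph{distinct} rearrangements throughout, both in Lemma~\ref{lem:zlambda} and in the disjoint-union decomposition, so that the two sums are indexed identically.
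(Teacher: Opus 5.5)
Your proposal is correct and follows essentially the same route as the paper's proof. The last equality comes from the observation that $\rp(\hat w)=\lambda$ if and only if $\rc(\hat w)\sim\lambda$; the first from Theorem~\ref{thm:bform} with $b_\alpha=b_\lambda$ and Lemma~\ref{lem:zlambda}; and the middle one from cancelling $k^{m_k}$ against $z_\lambda$ and applying \eqref{eq:phiintro}.
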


\begin{proof}
  Since $\rp(\hat w)=\lambda$ if and only if $\rc(\hat w)\sim\lambda$,
  the last equality follows by summing Theorem~\ref{thm:main} over
  $\alpha\sim\lambda$; it remains to evaluate the sum.  The product
  $b_\alpha\coloneqq b_{\alpha_1}\dots b_{\alpha_\ell}$ equals $b_\lambda$ for
  every $\alpha\sim\lambda$, so by Theorem~\ref{thm:bform} and
  Lemma~\ref{lem:zlambda},
  \[
  \sum_{\alpha\sim\lambda}N(\alpha)
  =(2n)!\,b_\lambda\sum_{\alpha\sim\lambda}\frac{1}{\piu(\alpha)}
  =(2n)!\,\frac{b_\lambda}{z_\lambda}.
  \]
  Finally, with $b_k=kE_{2k-1}/(2k)!$ and $m_k=m_k(\lambda)$, we have
  \[
  \frac{b_\lambda}{z_\lambda}
  =\prod_{k\ge1}\frac{b_k^{\,m_k}}{k^{m_k}\,m_k!}
  =\prod_{k\ge1}\frac{1}{m_k!}
  \left(\frac{k\,E_{2k-1}}{(2k)!\,k}\right)^{m_k}
  =\prod_{k\ge1}\frac{1}{m_k!}
  \left(\frac{E_{2k-1}}{(2k)!}\right)^{m_k},
  \]
  which is $|\varphi(\lambda)|/(2n)!$ by~\eqref{eq:phiintro}.
\end{proof}

We emphasize that Corollary~\ref{cor:recover} does not constitute an
essentially new proof of \eqref{eq:ass32}: as explained in the
introduction, our bijection is the construction of \cite[proof of
Theorem~3.2]{ASS} with the canonical ordering of the blocks retained.

\section{Noncommutative symmetric functions}\label{sec:nsym}

So far the record composition has been a purely combinatorial
statistic.  We now explain the algebraic phenomenon behind it,
answering the second half of the problem of~\cite{ASS}.  The guiding
idea is simple to state.  The symmetric function $A_n$ lives in the
commutative algebra $\Sym_K$, where its power sum expansion is indexed
by \emph{partitions} and therefore cannot distinguish orderings.
There is, however, a noncommutative algebra $\NSym$ sitting above
$\Sym_K$, its elements are indexed by \emph{compositions}, together
with a projection $\chi\colon\NSym\to\Sym_K$ that forgets the ordering.
We will produce a canonical element $\bA_n\in\NSym$ with
$\chi(\bA_n)=A_n$ whose coordinates, in the right basis, are exactly
the record composition counts $N(\alpha)$.  In other words, $N(\alpha)$
is what one sees before $\chi$ erases the order.

Section~\ref{subsec:nsymbg} collects the facts we need about $\NSym$,
with proofs included so the paper is self-contained; readers new to
the subject lose nothing by treating $\NSym$ concretely as the algebra
of polynomials in noncommuting variables $S_1,S_2,\dots$.
Section~\ref{subsec:lift} constructs the lift, proves
Theorem~\ref{thm:nsym}, and records the negative results that delimit
it.

\subsection{Background on $\NSym$}\label{subsec:nsymbg}
We recall what we need from~\cite{GKLLRT}, working over the field $K$
of Section~\ref{subsec:sym}.  Let $\NSym$ denote the free associative
$K$-algebra $K\langle S_1,S_2,S_3,\dots\rangle$ on noncommuting
generators $S_k$ of degree $k$, with $S_0\coloneqq 1$, and let
\[
\sigma(t)\;\coloneqq \;\sum_{k\ge0}S_k\,t^k\;\in\;\NSym[[t]],
\]
where $t$ is a central indeterminate.  In~\cite[\S3.1]{GKLLRT} the
algebra is instead presented as the free algebra on noncommutative
elementary functions $\Lambda_k$, and the $S_k$ are defined there by
\[ \sigma(t)\cdot\sum_{k\ge0}(-t)^k\Lambda_k=1. \]
That the $S_k$ so
defined form another family of free generators, so that the two
presentations yield isomorphic graded algebras, is established
in~\cite{GKLLRT}.  We make no use of the $\Lambda_k$: everything in
this section is developed from the $S$-presentation alone, which we
take as the definition.

The words
$S^\alpha\coloneqq S_{\alpha_1}S_{\alpha_2}\dots S_{\alpha_\ell}$, for
$\alpha\vDash n$, form a basis of the homogeneous component
$\NSym_n$; here and below, an empty product of generators is $1$, so
that $S^\varnothing=1$ spans $\NSym_0=K$.
Following \cite[Definition~3.1]{GKLLRT}, the \emph{noncommutative
power sums of the first kind} $\Psi_k$ are defined by
\begin{equation}\label{eq:ode}
  \frac{d}{dt}\,\sigma(t)\;=\;\sigma(t)\,\psi(t),
  \qquad \psi(t)\coloneqq \sum_{k\ge1}\Psi_k\,t^{k-1};
\end{equation}
comparing coefficients of $t^{n-1}$, this is equivalent to
\begin{equation}\label{eq:newton}
  n\,S_n\;=\;\sum_{k=1}^{n}S_{n-k}\,\Psi_k\qquad(n\ge1),
\end{equation}
which determines $\Psi_n=nS_n-\sum_{k=1}^{n-1}S_{n-k}\Psi_k$
recursively; each $\Psi_n$ is homogeneous of degree $n$ and, by
induction on \eqref{eq:newton}, has \emph{integer} coordinates in the
basis $\{S^\alpha\}$.  For $\alpha\vDash n$ put
$\Psi^\alpha\coloneqq \Psi_{\alpha_1}\dots\Psi_{\alpha_\ell}$, so
$\Psi^\varnothing=1$.  The
\emph{noncommutative power sums of the second kind} $\Phi_k$ are
defined by $\sigma(t)=\exp\bigl(\sum_{k\ge1}\Phi_kt^k/k\bigr)$
\cite[Definition~3.1]{GKLLRT}; they will appear only in
Remark~\ref{rem:phi}.

\begin{example}\label{ex:psi}
  Running the recursion for small $n$ gives
  \[
  \Psi_1=S_1,\qquad
  \Psi_2=2S_2-S_1^2,\qquad
  \Psi_3=3S_3-S_2S_1-2S_1S_2+S_1^3,
  \]
  each with integer $S$-word coordinates, as claimed.  As a check of
  Proposition~\ref{prop:gkllrt} at $n=2$: the compositions of $2$ are
  $(2)$ and $(1,1)$, with $\piu(2)=2$ and $\piu(1,1)=1\cdot2=2$, so
  \[
  \frac{\Psi^{(2)}}{\piu(2)}+\frac{\Psi^{(1,1)}}{\piu(1,1)}
  =\frac{2S_2-S_1^2}{2}+\frac{S_1^2}{2}=S_2,
  \]
  since $\Psi^{(1,1)}=\Psi_1\Psi_1=S_1^2$.
\end{example}

The following expansion is \cite[Proposition~4.5]{GKLLRT}
(specialized to a one-part composition); we include the short proof
because its induction is the exact algebraic shadow of the bijection
of Section~\ref{sec:count}.

\begin{proposition}[{\cite[Proposition~4.5]{GKLLRT}}]\label{prop:gkllrt}
  For all $n\ge1$, we have
  \[
  S_n\;=\;\sum_{\alpha\vDash n}\frac{\Psi^\alpha}{\piu(\alpha)}.
  \]
\end{proposition}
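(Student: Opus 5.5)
We argue by strong induction on $n$, using only the recursion \eqref{eq:newton}. Set $T_n\coloneqq\sum_{\alpha\vDash n}\Psi^\alpha/\piu(\alpha)$ for $n\ge0$, so $T_0=1$ (the empty composition contributes $\Psi^\varnothing/\piu(\varnothing)=1$). The goal is to show that the family $(T_n)$ satisfies the same recursion as $(S_n)$, namely $nT_n=\sum_{k=1}^n T_{n-k}\Psi_k$ for $n\ge1$. Since $S_0=T_0=1$ and \eqref{eq:newton} determines $S_n$ from $S_0,\dots,S_{n-1}$ and $\Psi_1,\dots,\Psi_n$ (divide by $n$, legitimate in characteristic zero), induction then gives $S_n=T_n$ for all $n$.

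To verify the recursion for $T_n$, we decompose each nonempty composition $\alpha\vDash n$ by its last part, exactly as in the proof of Lemma~\ref{lem:zlambda}. Write $\alpha=(\alpha',k)$ with $k=\alpha_\ell\in[n]$ and $\alpha'\vDash n-k$ arbitrary; this is a bijection between compositions of $n$ and pairs $(k,\alpha')$ with $1\le k\le n$, $\alpha'\vDash n-k$. Under it, $\Psi^\alpha=\Psi^{\alpha'}\Psi_k$ (the last factor sits on the right, matching the right-multiplication by $\psi(t)$ in \eqref{eq:ode}), and $\piu(\alpha)=\piu(\alpha')\cdot s_\ell(\alpha)=n\,\piu(\alpha')$, since the last partial sum is $n$ and the earlier partial sums of $\alpha$ are exactly those of $\alpha'$. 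Hence
\[
nT_n=\sum_{k=1}^n\sum_{\alpha'\vDash n-k}\frac{n\,\Psi^{\alpha'}\Psi_k}{n\,\piu(\alpha')}=\sum_{k=1}^n T_{n-k}\Psi_k,
\]
with the $k=n$ term correctly given by $T_0\Psi_n=\Psi_n$.

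The only point needing care is noncommutativity: the recursion \eqref{eq:newton} has $\Psi_k$ on the right of $S_{n-k}$, so the peel-off must remove the last part, not the first. Removing the first part would produce the mismatched factor $\piu$ of the shifted partial sums and would not close up. With the last-part decomposition, the computation is purely formal, and this step is the analogue of Lemma~\ref{lem:orderedpartitions}, where the block containing the maximum is forced to be the last one.
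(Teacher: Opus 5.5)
Your proof is correct and is essentially the paper's own argument: strong induction on $n$ via the recursion \eqref{eq:newton}. Each $\alpha\vDash n$ is split as $\beta\vDash n-k$ followed by its last part $k$, with $\Psi^\alpha=\Psi^\beta\Psi_k$ and $\piu(\alpha)=n\,\piu(\beta)$ (you also correctly note that the last part, not the first, must be peeled off because $\Psi_k$ sits on the right). The only cosmetic difference is that you show $T_n$ satisfies the same recursion as $S_n$ and conclude by uniqueness, whereas the paper substitutes the inductive hypothesis directly into $S_n=\frac1n\sum_{k}S_{n-k}\Psi_k$.
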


\begin{proof}
  We prove the identity for all $n\ge0$ and induct on $n$; the base
  case $n=0$ reads $S_0=1=\Psi^\varnothing/\piu(\varnothing)$, which
  holds by our conventions.  For $n\ge1$, by \eqref{eq:newton} and the
  inductive hypothesis applied to $S_{n-k}$ for $1\le k\le n$,
  \[
  S_n=\frac1n\sum_{k=1}^{n}S_{n-k}\Psi_k
  =\frac1n\sum_{k=1}^{n}\;\sum_{\beta\vDash n-k}
  \frac{\Psi^\beta}{\piu(\beta)}\,\Psi_k .
  \]
  Every $\alpha\vDash n$ arises exactly once in this double sum, as the
  concatenation of $\beta\vDash n-k$ with the one-part composition
  $(k)$, where $k=\alpha_{\ell(\alpha)}$ is the last part of $\alpha$
  (the case $k=n$, with $\beta$ empty, included); and
  $\piu(\alpha)=\piu(\beta)\cdot n$ since the new partial sum is
  $s_{\ell(\alpha)}(\alpha)=n$.
\end{proof}

\begin{corollary}\label{cor:psibasis}
  For every $\beta\vDash n$,
  $S^\beta=\sum_{\alpha}c_{\alpha,\beta}\,\Psi^\alpha$, where $\alpha$
  runs over the compositions of $n$ refining $\beta$ and
  $c_{\beta,\beta}=\prod_p 1/\beta_p\ne0$.  Consequently
  $\{\Psi^\alpha:\alpha\vDash n\}$ is a basis of $\NSym_n$, and
  $\Psi_1,\Psi_2,\dots$ freely generate $\NSym$.
\end{corollary}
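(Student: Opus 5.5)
We will first prove the expansion of $S^\beta$ and then read off the basis and freeness statements from it. By Proposition~\ref{prop:gkllrt}, each factor satisfies
\[
S_{\beta_p}=\sum_{\gamma\vDash\beta_p}\Psi^\gamma/\piu(\gamma).
\]
Multiplying these expansions in order for $p=1,\dots,\ell(\beta)$ gives
\[
S^\beta=\sum_{\gamma^1,\dots,\gamma^{\ell}}\;\prod_p\piu(\gamma^p)^{-1}\;\Psi^{\gamma^1\gamma^2\cdots\gamma^\ell},
\]
where $\gamma^p\vDash\beta_p$ and the concatenation $\gamma^1\cdots\gamma^\ell$ is a composition of $n$. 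The concatenations arising this way are exactly the compositions $\alpha\vDash n$ that refine $\beta$. Each such $\alpha$ arises exactly once, since $S(\beta)\subseteq S(\alpha)$ determines the cut points. Hence $c_{\alpha,\beta}=\prod_p\piu(\gamma^p)^{-1}$. For $\alpha=\beta$ every $\gamma^p$ is the one-part composition $(\beta_p)$, and $\piu((\beta_p))=\beta_p$, which gives $c_{\beta,\beta}=\prod_p 1/\beta_p\neq0$.

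For the basis claim, we order the compositions of $n$ by any linear extension of refinement. The transition matrix expressing $\{S^\beta\}$ in terms of $\{\Psi^\alpha\}$ is then triangular, because $c_{\alpha,\beta}=0$ unless $\alpha$ refines $\beta$. Its diagonal entries are nonzero, and $\operatorname{char}K=0$, so the matrix is invertible over $K$. Since $\{S^\beta:\beta\vDash n\}$ is a basis of $\NSym_n$ and both families have $2^{n-1}$ elements, $\{\Psi^\alpha:\alpha\vDash n\}$ is also a basis of $\NSym_n$.

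For free generation, we consider the graded algebra map $K\langle X_1,X_2,\dots\rangle\to\NSym$ defined by $X_k\mapsto\Psi_k$. In each degree $n$ it sends the basis of words $X^\alpha$ to the basis $\Psi^\alpha$, so it is a bijection in every degree and therefore an isomorphism. This shows that $\Psi_1,\Psi_2,\dots$ freely generate $\NSym$.

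The main obstacle is the bookkeeping in the first step: we must check that the expansion sums over exactly the refinements of $\beta$, each occurring exactly once. This follows from the bijection between refinements $\alpha$ of $\beta$ and tuples $(\gamma^1,\dots,\gamma^\ell)$ of compositions $\gamma^p\vDash\beta_p$. Everything after that is linear algebra.
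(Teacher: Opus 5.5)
Your proposal is correct and follows the paper's proof essentially step for step. You multiply the expansions from Proposition~\ref{prop:gkllrt}, identify the concatenations uniquely with the refinements of $\beta$ by cutting at the partial sums of $\beta$, read off the diagonal coefficient $\prod_p 1/\beta_p$, and use triangularity with respect to a linear extension of refinement for the basis claim. You then obtain free generation from the free algebra map $X_k\mapsto\Psi_k$ sending the word basis to $\{\Psi^\alpha\}$.
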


\begin{proof}
  Write $m\coloneqq \ell(\beta)$.  Multiplying the expansions of
  Proposition~\ref{prop:gkllrt} for
  $S_{\beta_1},\dots,S_{\beta_m}$ expresses $S^\beta$ as a sum of
  $\Psi^\alpha$ over concatenations
  $\alpha=\gamma^{(1)}\dots\gamma^{(m)}$ with $\gamma^{(p)}\vDash
  \beta_p$.  These concatenations are exactly the refinements of
  $\beta$: every refinement $\alpha$ of $\beta$ decomposes uniquely as
  such a concatenation, by cutting $\alpha$ at the partial sums of
  $\beta$.  The coefficient of $\Psi^\beta$ itself (each
  $\gamma^{(p)}=(\beta_p)$) is
  $\prod_p\piu\bigl((\beta_p)\bigr)^{-1}=\prod_p 1/\beta_p$.  Thus the
  transition matrix from $\{S^\beta\}$ to $\{\Psi^\alpha\}$ is
  triangular with nonzero diagonal with respect to (any linear extension
  of) the refinement order on compositions of $n$, hence invertible;
  since $\{S^\beta\}$ is a basis of $\NSym_n$, so is
  $\{\Psi^\alpha\}$.  Finally, the algebra map from the free algebra on
  generators $y_1,y_2,\dots$ (with $\deg y_k=k$) to $\NSym$ sending
  $y_k\mapsto\Psi_k$ carries the word basis to the basis
  $\{\Psi^\alpha\}$, hence is an isomorphism onto $\NSym$.
\end{proof}

Let $\chi\colon\NSym\to\Sym_K$ be the algebra homomorphism with
$\chi(S_n)=h_n$ (well defined by freeness); this is the standard
commutative projection of \cite{GKLLRT}.

\begin{lemma}\label{lem:chi} For all $n\geq 1$, we have
  $\chi(\Psi_n)=p_n$. In particular, we have
  $\chi(\Psi^\alpha)=p_{\lambda(\alpha)}$.
\end{lemma}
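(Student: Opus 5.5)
The plan is to apply the algebra homomorphism $\chi$ to the defining relation \eqref{eq:ode} and compare it with the classical Newton-type identity for complete homogeneous and power sum symmetric functions. Write
\[
H(t)\coloneqq \sum_{k\ge0}h_k t^k=\prod_{i\ge1}\frac{1}{1-x_it}\in\Sym_K[[t]],
\qquad P(t)\coloneqq \sum_{k\ge1}p_k t^{k-1}.
\]
Since $\chi$ is a $K$-algebra map and $t$ is central, extending $\chi$ coefficientwise to power series in $t$ gives $\chi(\sigma(t))=H(t)$. Coefficientwise application also commutes with $d/dt$. So \eqref{eq:ode} yields
\[
H'(t)=H(t)\,\chi(\psi(t)),\qquad \chi(\psi(t))=\sum_{k\ge1}\chi(\Psi_k)\,t^{k-1}.
\]

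Next I will verify that $H'(t)=H(t)P(t)$. Taking the logarithmic derivative of the product formula,
\[
\frac{H'(t)}{H(t)}=\frac{d}{dt}\sum_i-\log(1-x_it)=\sum_i\frac{x_i}{1-x_it}=\sum_{k\ge1}p_kt^{k-1}.
\]
If one prefers to avoid infinite products, the same identity is the coefficient form $nh_n=\sum_{k=1}^n h_{n-k}p_k$. This can be checked directly on monomials, or taken as the standard result from \cite[Ch.~7]{EC2}.

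Then I compare the two equations. Since $\Sym_K$ is commutative and $H(t)$ has constant term $1$, $H(t)$ is invertible in $\Sym_K[[t]]$. It follows that $\chi(\psi(t))=H(t)^{-1}H'(t)=P(t)$, and comparing coefficients of $t^{n-1}$ gives $\chi(\Psi_n)=p_n$.

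An alternative route avoids inverting $H(t)$. Apply $\chi$ to \eqref{eq:newton} to get $nh_n=\sum_k h_{n-k}\chi(\Psi_k)$, and then induct on $n$ against Newton's identity. At each step the top term $h_0\chi(\Psi_n)=\chi(\Psi_n)$ is isolated and must equal $p_n$.

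The second claim follows from multiplicativity: $\chi(\Psi^\alpha)=\prod_j p_{\alpha_j}=p_{\lambda(\alpha)}$, because $\Sym_K$ is commutative and $p_\lambda$ is by definition the product of the $p_{\lambda_i}$ in any order. The only real ingredient is the identity $H'=HP$, which is classical, so I expect no serious obstacle. The one care point is the order of factors in \eqref{eq:ode}, which is $\sigma\psi$ rather than $\psi\sigma$. This becomes irrelevant after applying $\chi$, since the image is commutative.
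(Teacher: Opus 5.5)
Your proposal is correct and follows the paper's proof: both derive $nh_n=\sum_{k=1}^n h_{n-k}p_k$ from the logarithmic derivative of $H(t)=\prod_i(1-x_it)^{-1}$, apply $\chi$ to \eqref{eq:newton}, and conclude by uniqueness. Your inversion of $H(t)$ in $\Sym_K[[t]]$ is just the generating-function form of the paper's recursive argument ($x_n=nh_n-\sum_{k<n}h_{n-k}x_k$), which you also give as your alternative route. The second claim follows from multiplicativity of $\chi$ and commutativity of $\Sym_K$, as you say.
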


\begin{proof}
  Let $H(t)\coloneqq \sum_{n\ge0}h_nt^n=\prod_{i\ge1}(1-x_it)^{-1}$.  Then
  $\log H(t)=-\sum_i\log(1-x_it)$, so
  $H'(t)/H(t)=\sum_i x_i/(1-x_it)=\sum_{n\ge1}p_nt^{n-1}$, i.e.,
  $nh_n=\sum_{k=1}^n h_{n-k}p_k$ for all $n\ge1$.  Applying $\chi$ to
  \eqref{eq:newton} gives $nh_n=\sum_{k=1}^n h_{n-k}\chi(\Psi_k)$.
  Since the system $nh_n=\sum_{k=1}^n h_{n-k}x_k$ ($n\ge1$) determines
  the $x_k$ uniquely (solve recursively:
  $x_n=nh_n-\sum_{k<n}h_{n-k}x_k$), we conclude $\chi(\Psi_n)=p_n$.
\end{proof}

Finally, we recall the ribbon basis, needed only for
Remark~\ref{rem:negative}, Question~\ref{q:ribbon}, and
Example~\ref{ex:negative}.  For $\alpha\vDash n$, we set
\begin{equation}\label{eq:ribbon}
  R_\alpha\;\coloneqq \;\sum_{\beta}(-1)^{\ell(\alpha)-\ell(\beta)}\,S^\beta,
\end{equation}
the sum over the coarsenings $\beta$ of $\alpha$; equivalently, by
M\"obius inversion over the Boolean lattice of subsets of $S(\alpha)$,
$S^\alpha=\sum_\beta R_\beta$, again summed over coarsenings.  Since
coarsening is a partial order and the coefficient of $S^\alpha$ in
\eqref{eq:ribbon} is $1$, the family $\{R_\alpha:\alpha\vDash n\}$ is
a basis of $\NSym_n$, with integer transition matrices to and from
$\{S^\alpha\}$.  This is one of the equivalent descriptions of the
\emph{ribbon basis} of \cite{GKLLRT}; its commutative image
$\chi(R_\alpha)$ is the skew Schur function of a ribbon
(border-strip) shape determined by $\alpha$, by the classical
alternating-sum formula for ribbon Schur functions in terms of the
$h_\mu$ (see \cite{GKLLRT} and \cite[Ch.~7]{EC2}).

\subsection{The canonical lift of a sprout sequence}
\label{subsec:lift}

We can now make precise the ``element sitting above $A_n$'' promised at
the start of the section.  The idea is to take the expansion
$S_n=\sum_\alpha\Psi^\alpha/\piu(\alpha)$ of
Proposition~\ref{prop:gkllrt} and rescale the generator $\Psi_k$ by
the seed coefficient $b_k$; this is exactly what a sprout sequence
does at the commutative level, and the rescaling is realized by an
algebra endomorphism of $\NSym$.

\begin{definition}\label{def:lift}
  Let $b=(b_1,b_2,\dots)$ be a sequence of elements of $K$.  By
  Corollary~\ref{cor:psibasis} there is a unique algebra
  endomorphism $\phi_b$ of $\NSym$ with $\phi_b(\Psi_k)=b_k\Psi_k$ for
  all $k$, and it is graded, each $b_k\Psi_k$ being homogeneous of
  degree $k$.  Given a sprout sequence $\mathfrak{R}=(R_0,R_1,\dots)$ over
  $K$ with seed $F$, let $b$ be as in \eqref{eq:seedb} and define the
  \emph{canonical noncommutative lift}
  \[
  \bR_n\;\coloneqq \;\phi_b(S_n)\;\in\;\NSym_n.
  \]
  For the seed $\sec(\sqrt t\,)$, so $b_k=kE_{2k-1}/(2k)!$ as
  in~\eqref{eq:bdef}, we write $\bA_n\coloneqq \phi_b(S_n)$.
\end{definition}

\begin{theorem}\label{thm:ncsprout}
  Let $\mathfrak{R}$ be a sprout sequence over $K$ with seed
  coefficients $b$ as in \eqref{eq:seedb}, and write
  $b_\alpha\coloneqq b_{\alpha_1}\dots b_{\alpha_\ell}$ for
  $\alpha\vDash n$.  Then we have
  \[
  \text{(i)}\quad
  \bR_n=\sum_{\alpha\vDash n}\frac{b_\alpha}{\piu(\alpha)}\,\Psi^\alpha,
  \qquad\qquad
  \text{(ii)}\quad \chi(\bR_n)=R_n .
  \]
\end{theorem}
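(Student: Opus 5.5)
Part (i) is a direct application of $\phi_b$ to Proposition~\ref{prop:gkllrt}. Since $\phi_b$ is a $K$-algebra endomorphism with $\phi_b(\Psi_k)=b_k\Psi_k$, multiplicativity gives
\[
\phi_b(\Psi^\alpha)=\phi_b(\Psi_{\alpha_1})\cdots\phi_b(\Psi_{\alpha_\ell})=b_{\alpha_1}\cdots b_{\alpha_\ell}\,\Psi^\alpha=b_\alpha\Psi^\alpha,
\]
because the scalars $b_k\in K$ are central. Applying the linear map $\phi_b$ to $S_n=\sum_{\alpha\vDash n}\Psi^\alpha/\piu(\alpha)$ then yields (i) at once. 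The existence and uniqueness of $\phi_b$ are already provided by Corollary~\ref{cor:psibasis} through Definition~\ref{def:lift}.

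For (ii), I will apply the algebra homomorphism $\chi$ to the expansion in (i). By Lemma~\ref{lem:chi}, $\chi(\Psi^\alpha)=p_{\lambda(\alpha)}$, and $b_\alpha=b_{\lambda(\alpha)}$ because the product of scalars is commutative. Grouping compositions by their sorted partition gives
\[
\chi(\bR_n)=\sum_{\lambda\vdash n} b_\lambda\, p_\lambda \sum_{\alpha\sim\lambda}\frac{1}{\piu(\alpha)}.
\]
Lemma~\ref{lem:zlambda} evaluates the inner sum as $1/z_\lambda$. The result is $\sum_{\lambda\vdash n}z_\lambda^{-1}b_\lambda p_\lambda$, which equals $R_n$ by the power-sum expansion \eqref{eq:pexp} quoted from \cite[Theorem~2.1(d)]{ASS}.

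No step here is a real obstacle. The only points to state carefully are that $\phi_b$ is $K$-linear, so it passes through the rational coefficients $1/\piu(\alpha)$, and that the regrouping over rearrangements is a finite reindexing. If one prefers not to rely on \eqref{eq:pexp}, it can be derived directly. Take logarithms of $\prod_i F(x_it)$ to get $\log\sum_n R_nt^n=\sum_k b_kp_kt^k/k$. Exponentiating this and using the standard expansion of $\exp\bigl(\sum_k c_k t^k/k\bigr)$ in terms of $z_\lambda^{-1}c_\lambda$ then reproduces the same formula.
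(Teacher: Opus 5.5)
Your proposal is correct and follows essentially the same route as the paper: (i) by applying $\phi_b$ to Proposition~\ref{prop:gkllrt} using $\phi_b(\Psi^\alpha)=b_\alpha\Psi^\alpha$, and (ii) by applying $\chi$, grouping by $\lambda(\alpha)$, and invoking Lemmas~\ref{lem:chi} and~\ref{lem:zlambda} together with \eqref{eq:pexp}. Your optional direct derivation of \eqref{eq:pexp}, taking logarithms of $\prod_i F(x_it)$ and exponentiating, is a valid self-contained replacement for the citation to \cite[Theorem~2.1(d)]{ASS}.
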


\begin{proof}
  (i) Apply $\phi_b$ to Proposition~\ref{prop:gkllrt} and use
  $\phi_b(\Psi^\alpha)=b_\alpha\Psi^\alpha$.  (ii) By (i),
  Lemma~\ref{lem:chi}, and Lemma~\ref{lem:zlambda},
  \[
  \chi(\bR_n)
  =\sum_{\alpha\vDash n}\frac{b_\alpha}{\piu(\alpha)}\,p_{\lambda(\alpha)}
  =\sum_{\lambda\vdash n}b_\lambda\,p_\lambda
  \sum_{\alpha\sim\lambda}\frac{1}{\piu(\alpha)}
  =\sum_{\lambda\vdash n}z_\lambda^{-1}\,b_\lambda\,p_\lambda
  = R_n,
  \]
  the last equality by \eqref{eq:pexp}, i.e., by
  \cite[Theorem~2.1(d)]{ASS}.
\end{proof}

\begin{proof}[Proof of Theorem~\ref{thm:nsym}]
  For every $\alpha\vDash n$,
  \[
  (2n)!\,[\Psi^\alpha]\,\bA_n
  =(2n)!\,\frac{b_\alpha}{\piu(\alpha)}
  =N(\alpha)
  =\#\{w\in\Alt_{2n}:\rc(\hat w)=\alpha\},
  \]
  where the first equality is Theorem~\ref{thm:ncsprout}(i) applied to
  the seed $\sec(\sqrt t\,)$, the second is Theorem~\ref{thm:bform},
  and the third is the definition of $N(\alpha)$ in
  Theorem~\ref{thm:main}.  Theorem~\ref{thm:ncsprout}(ii) gives
  $\chi(\bA_n)=A_n$, and applying $\chi$ to the displayed expansion
  recovers, via Lemma~\ref{lem:chi} and
  Corollary~\ref{cor:recover}, the power sum expansion
  $(2n)!A_n=\sum_\lambda|\varphi(\lambda)|\,p_\lambda$.
\end{proof}

Concretely, for $n=2,3$ this reads
$4!\,\bA_2=3\,\Psi^{(1,1)}+2\,\Psi^{(2)}$ and
$6!\,\bA_3=15\,\Psi^{(1,1,1)}+20\,\Psi^{(1,2)}+10\,\Psi^{(2,1)}
+16\,\Psi^{(3)}$, the coefficients being the entries of
Table~\ref{tab:small}.  Example~\ref{ex:lifts} carries out these
computations and converts them to the $S$-word basis, and
Example~\ref{ex:negative} exhibits the first negative $S$-word and
ribbon coefficients at $n=4$.

\begin{remark}[Why the first kind]\label{rem:ode}
  The parallel between Proposition~\ref{prop:gkllrt} and
  Theorem~\ref{thm:main} is exact: the induction proving the former
  peels off the last factor $\Psi_k$ at the cost of the partial sum
  $s_\ell=n$, while the bijection proving the latter peels off the block
  containing the letter $2n$ (necessarily the last block) at the cost of
  the binomial factor $\binom{2n-1}{2k-1}$.  The commutative shadow of
  this common recursion is the classical identity
  \[
  E_{2n}\;=\;\sum_{k=1}^{n}\binom{2n-1}{2k-1}E_{2k-1}E_{2n-2k},
  \]
  obtained from Theorem~\ref{thm:main} by summing over all
  $\alpha\vDash n$ with last part $k$: for $k<n$, deleting the last
  part and using $\sum_{\beta\vDash n-k}N(\beta)=E_{2n-2k}$ (every
  element of $\Alt_{2n-2k}$ has exactly one record composition), and
  for $k=n$, observing that the single composition $\alpha=(n)$
  contributes $N(n)=E_{2n-1}=\binom{2n-1}{2n-1}E_{2n-1}E_0$.  In terms
  of exponential generating functions this is
  $\sec' = \sec\cdot\tan$, of which the defining relation
  \eqref{eq:ode}, $\sigma'=\sigma\psi$, is the noncommutative lift.  In
  this precise sense the record composition statistic ``lives in'' the
  basis $\{\Psi^\alpha\}$.
\end{remark}

\begin{remark}[The second kind does not fit]\label{rem:phi}
  From the formal identity
  \[ \sigma(t)=\exp\bigl(\sum_k\Phi_kt^k/k\bigr), \]
  one obtains, by
  expanding the exponential, the identity
  \[ S_n=\sum_{\alpha\vDash n}
  \Phi^\alpha/\bigl(\ell(\alpha)!\,\alpha_1\dots\alpha_\ell\bigr) \]
  in $\NSym$ (no basis property of the $\Phi^\alpha$ is needed here).
  Rescaling the summand indexed by $\alpha$ by $b_\alpha$, exactly as
  Theorem~\ref{thm:ncsprout}(i) rescales the $\Psi$-expansion, produces
  the numbers
  $(2n)!\,b_\alpha/\bigl(\ell(\alpha)!\prod_j\alpha_j\bigr)$, which are
  \emph{symmetric} in the parts of $\alpha$.  For $n=3$, they equal $15$
  for both orderings of the parts $\{1,2\}$, whereas $N(1,2)=20$ and
  $N(2,1)=10$.  Consequently, no statement of the form ``$(2n)!$ times
  the analogously rescaled $\Phi$-coefficient equals $N(\alpha)$'' can
  hold. The record composition statistic distinguishes orderings that
  the $\Phi$-coefficients cannot.  Only the averaged (commutative)
  information survives in the $\Phi$-picture.
\end{remark}

\begin{remark}[Integrality and failure of positivity in other bases]
  \label{rem:negative}
  By Theorem~\ref{thm:nsym}, $(2n)!\,\bA_n$ has nonnegative integer
  coordinates in the basis $\{\Psi^\alpha\}$.  Its coordinates in the
  basis $\{S^\alpha\}$, and in the ribbon basis $\{R_\alpha\}$ of
  \cite{GKLLRT}, are also integers. Indeed, we have
  \[ (2n)!\,\bA_n=\sum_\alpha N(\alpha)\Psi^\alpha, \]
  with
  $N(\alpha)\in\Z$. Each $\Psi^\alpha$ has integer $S$-word coordinates
  (noted after \eqref{eq:newton}), and the $S$-to-ribbon transition is
  integral in both directions (noted after \eqref{eq:ribbon}).
  However, these coordinates are \emph{not} nonnegative in
  general.  Exact computation in the free algebra (see
  Remark~\ref{rem:verification}) shows that nonnegativity holds in both
  bases for $n\le3$ and fails for every $4\le n\le 8$. For $n=4$, we have
  \[
  [S^{(1,2,1)}]\,8!\,\bA_4=-8,
  \qquad
  [R_{(1,2,1)}]\,8!\,\bA_4=-7,
  \]
  with all remaining coordinates positive (the full expansions are
  Example~\ref{ex:negative}).  Two consequences deserve emphasis.
  First, since $\chi(S^\alpha)=h_{\lambda(\alpha)}$, the $S$-word
  coordinates of $(2n)!\,\bA_n$ refine, as signed integers, the
  $h$-expansion coefficients of $(2n)!A_n$, which are nonnegative by
  \cite[Theorem~5.1]{ASS}; the computation above shows that the
  combinatorial interpretation of those $h$-coefficients, sought in the
  problem posed in \cite[\S5]{ASS}, cannot be obtained by refining along the
  $S$-word coordinates of the canonical lift $\bA_n$.  Second, the
  $h$-positivity of $(2n)!A_n$ does not lift to ribbon positivity of
  $\bA_n$ (nonnegativity of the coordinates in the basis
  $\{R_\alpha\}$), even though ribbon positivity does hold for some
  sprout lifts (e.g., for the seed $(1-t)^{-1}$ one has $b_k=1$ and
  $\bR_n=S_n=R_{(n)}$).
\end{remark}

\begin{question}\label{q:ribbon}
  For which sprout sequences over $\mathbb{R}$ is the canonical lift
  $\bR_n$ ribbon positive for all $n$?
\end{question}

This property is strictly stronger than the Schur positivity of the
sprout sequence itself.  Indeed, $\chi$ carries the ribbon basis to
the ribbon Schur functions, which are skew Schur functions and hence
Schur positive~\cite[Ch.~7]{EC2}.  Thus, by
Theorem~\ref{thm:ncsprout}(ii), if $\bR_n$ is ribbon positive then
$R_n$ is Schur positive.  The Schur positive sprout sequences are
classified in~\cite[Theorem~2.9]{ASS} through the Edrei--Thoma
theorem. Question~\ref{q:ribbon} asks which of them satisfy the
stronger ribbon condition.  The two are not equivalent.
Remark~\ref{rem:negative} exhibits a Schur positive sprout sequence
whose lift is \emph{not} ribbon positive.

\begin{remark}[The dual quasisymmetric picture]\label{rem:qsym}
  It is natural to ask whether the record composition refinement can be
  seen on the dual side, and the answer is that it cannot: the
  refinement is a property of the \emph{lift} $\bA_n$, not of $A_n$.  We
  explain this briefly.

  The graded dual of $\NSym$ is the algebra $\QSym$ of quasisymmetric
  functions.  Under the pairing, the projection $\chi\colon\NSym\to\Sym_K$
  is adjoint to the inclusion $\Sym_K\subseteq\QSym$, where the pairing
  on $\Sym_K$ is the Hall inner product $\langle\,,\rangle$ (so that
  $\langle p_\lambda,p_\mu\rangle=z_\lambda\delta_{\lambda\mu}$; see
  \cite[Ch.~7]{EC2}).  For this duality we refer to~\cite{GKLLRT}
  and~\cite{BDHMN}. The bases of $\QSym$ dual to $\{\Psi^\alpha\}$ and
  $\{\Phi^\alpha\}$ are, up to normalization, the type~1 and type~2
  quasisymmetric power sums of Ballantine, Daugherty, Hicks, Mason, and
  Niese~\cite{BDHMN}.

  Now let $f\in\Sym_K$ be symmetric of degree $n$.  By adjointness, we have
  \[
  \langle\Psi^\alpha,f\rangle
  =\langle p_{\lambda(\alpha)},f\rangle
  =z_{\lambda(\alpha)}\,[p_{\lambda(\alpha)}]f,
  \]
  which depends only on the underlying partition $\lambda(\alpha)$. For
  $f=A_n$, it equals $b_{\lambda(\alpha)}$.  Therefore, pairing $A_n$ against
  the basis dual to $\{\Psi^\alpha\}$ returns the same value for all
  compositions $\alpha$ with a given sort $\lambda(\alpha)$.  The dual
  pairing is therefore blind to the order of the parts, which is exactly
  the information the record composition records.  This is why the
  refinement is visible only after lifting $A_n$ to $\bA_n$.
\end{remark}

\section{A general refinement for exponential seeds}\label{sec:general}

The count in Theorem~\ref{thm:main} is built from two ingredients:
ordered blocks with increasing maxima
(see Lemma~\ref{lem:orderedpartitions}), and an independently chosen
structure on each block (see Lemma~\ref{lem:blockwords}).  Only the
second ingredient refers to alternating permutations, and only the
bijection of Lemmas~\ref{lem:concat} and~\ref{lem:decomp} assembles
the decorated blocks into a single permutation. We explain this at the end of this
section.

Here we isolate the general mechanism, in the setting of
sprout sequences whose seed is given by the exponential formula, as in
\cite[Example~1.1(i)]{ASS}.
Fix a sequence $c=(c_1,c_2,\dots)$ of nonnegative integers and, for
each $k\ge1$, a finite set $\mathcal{C}_k$ with
$\#\mathcal{C}_k=c_k$ (``connected structures of size $k$'').  A
\emph{$c$-structure} on a finite set $X$ is a pair
$(\pi,\gamma)$ where $\pi$ is a set partition of $X$ and $\gamma$
assigns to each block $B\in\pi$ an element
$\gamma(B)\in\mathcal{C}_{\#B}$.  By the exponential formula
\cite[Corollary~5.1.6]{EC2}, the number of $c$-structures on an
$n$-set is $n!\,[t^n]\,F(t)$, where
\[
F(t)\;=\;\exp\Bigl(\sum_{k\ge1}c_k\,\frac{t^k}{k!}\Bigr),
\]
and comparing with \eqref{eq:seedb} shows that the seed coefficients
of the sprout sequence $\mathfrak{R}$ with this seed are
\begin{equation}\label{eq:bexp}
  b_k\;=\;\frac{c_k}{(k-1)!}\,.
\end{equation}
Given a $c$-structure $(\pi,\gamma)$ on $[n]$ with $\ell$ blocks,
list the blocks as $B_1,\dots,B_\ell$ so that
$\max B_1<\dots<\max B_\ell$ (this ordering is unique), and define
the \emph{max-ordered type} of $(\pi,\gamma)$ to be the composition
\[
\tau(\pi)\;\coloneqq \;(\#B_1,\dots,\#B_\ell)\;\vDash\;n.
\]

\begin{proposition}\label{prop:expseed}
  With the notation above, let $\alpha\vDash n$ with partial sums
  $s_j$.  The number of $c$-structures on $[n]$ with max-ordered type
  $\alpha$ equals
  \[
  \prod_{j=1}^{\ell}\binom{s_j-1}{\alpha_j-1}\,c_{\alpha_j}
  \;=\;n!\,\prod_{j=1}^{\ell}\frac{b_{\alpha_j}}{s_j}
  \;=\;n!\;[\Psi^\alpha]\,\bR_n\,,
  \]
  where $\bR_n=\phi_b(S_n)$ is the canonical lift of
  Definition~\ref{def:lift} with $b$ as in \eqref{eq:bexp}.
  Consequently,
  \[ n!\,\bR_n=\sum_{\alpha\vDash n}
  \#\{\text{$c$-structures on $[n]$ of max-ordered type $\alpha$}\}\,\Psi^\alpha, \]
  and applying $\chi$ recovers the
  interpretation that $n!\,[p_\lambda]R_n$
  counts the $c$-structures on $[n]$ whose multiset of block sizes is
  $\lambda$.
\end{proposition}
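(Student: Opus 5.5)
The plan is to mirror Section~\ref{sec:count} with $d=1$ in place of $d=2$. First I count $c$-structures of max-ordered type $\alpha$ directly. Such a structure is the same thing as an ordered set partition $(B_1,\dots,B_\ell)$ of $[n]$ with $\#B_j=\alpha_j$ and $\max B_1<\dots<\max B_\ell$, together with an independent choice $\gamma(B_j)\in\mathcal{C}_{\alpha_j}$ for each $j$. The correspondence is a bijection because the max-ordering of the blocks of $\pi$ is unique; that ordering recovers the ordered partition, and forgetting the order recovers $\pi$. By Lemma~\ref{lem:orderedpartitions} with $X=[n]$ and $m_j=\alpha_j$, the number of ordered partitions is $\prod_j\binom{s_j-1}{\alpha_j-1}$. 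Each block then admits $c_{\alpha_j}$ decorations, which gives the first expression.

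Next I pass to the $b$-form using Lemma~\ref{lem:telescope} with $d=1$:
\[
\prod_{j}\binom{s_j-1}{\alpha_j-1}c_{\alpha_j}
= n!\prod_j\frac{c_{\alpha_j}}{s_j\,(\alpha_j-1)!}
= n!\prod_j\frac{b_{\alpha_j}}{s_j},
\]
where the second equality uses \eqref{eq:bexp}. Before that, I verify \eqref{eq:bexp} itself: differentiating $\log F(t)=\sum_k c_k t^k/k!$ and comparing with $\sum_k b_k t^k/k$ gives $b_k/k=c_k/k!$, that is, $b_k=c_k/(k-1)!$. The last equality in the display of the proposition is then Theorem~\ref{thm:ncsprout}(i), since $[\Psi^\alpha]\bR_n=b_\alpha/\piu(\alpha)=\prod_j b_{\alpha_j}/s_j$. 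Because $\{\Psi^\alpha\}$ is a basis (Corollary~\ref{cor:psibasis}), the expansion $n!\,\bR_n=\sum_\alpha(\#\ldots)\Psi^\alpha$ follows.

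Finally, I apply $\chi$. Lemma~\ref{lem:chi} gives $\chi(\Psi^\alpha)=p_{\lambda(\alpha)}$, and Theorem~\ref{thm:ncsprout}(ii) gives $\chi(\bR_n)=R_n$. Hence $n!\,[p_\lambda]R_n$ equals the sum of the counts over $\alpha\sim\lambda$. Since the multiset of block sizes of a $c$-structure is exactly the sort of its max-ordered type, this sum counts the $c$-structures with block-size multiset $\lambda$.

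I do not expect a real obstacle here. The only point needing care is the bijection in the first step, namely that the max-ordered listing of blocks is canonical so that each $c$-structure is counted exactly once. This is simpler than Lemma~\ref{lem:decomp}, since no concatenation or record analysis is involved.
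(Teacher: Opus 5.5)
Your proposal is correct and matches the paper's proof essentially step for step. You identify $c$-structures of max-ordered type $\alpha$ with max-ordered set partitions counted by Lemma~\ref{lem:orderedpartitions} plus independent decorations, then apply Lemma~\ref{lem:telescope} with $d=1$ together with \eqref{eq:bexp}, then Theorem~\ref{thm:ncsprout}(i); for the final $\chi$-statement the paper sums over $\alpha\sim\lambda$ as in Corollary~\ref{cor:recover}, via Lemma~\ref{lem:zlambda}, which is exactly what underlies the Theorem~\ref{thm:ncsprout}(ii) you invoke, so the two routes coincide.
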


\begin{proof}
  A $c$-structure of max-ordered type $\alpha$ is exactly an ordered set
  partition $(B_1,\dots,B_\ell)$ of $[n]$ with $\#B_j=\alpha_j$ and
  increasing maxima, counted by
  $\prod_j\binom{s_j-1}{\alpha_j-1}$ by
  Lemma~\ref{lem:orderedpartitions}, together with an independent choice
  of $\gamma(B_j)\in\mathcal{C}_{\alpha_j}$ for each $j$, in
  $\prod_jc_{\alpha_j}$ ways.  For the second expression, apply
  Lemma~\ref{lem:telescope} with $d=1$:
  \[
  \prod_{j}\binom{s_j-1}{\alpha_j-1}c_{\alpha_j}
  =n!\prod_j\frac{c_{\alpha_j}}{s_j\,(\alpha_j-1)!}
  =n!\prod_j\frac{b_{\alpha_j}}{s_j},
  \]
  by \eqref{eq:bexp}.  The third expression is
  Theorem~\ref{thm:ncsprout}(i), since
  $\prod_js_j=\piu(\alpha)$.  The final statement follows by summing
  over $\alpha\sim\lambda$ as in Corollary~\ref{cor:recover}.
\end{proof}

For example, taking $c_k=1$ for $k$ in a fixed set
$S\subseteq\mathbb{Z}_{>0}$ and $c_k=0$ otherwise recovers a max-ordered
refinement of \cite[Proposition~2.5]{ASS}: the number of set
partitions of $[n]$, all of whose block sizes lie in $S$, whose blocks
listed by increasing maxima have sizes $\alpha_1,\dots,\alpha_\ell$,
is $\prod_j\binom{s_j-1}{\alpha_j-1}$ if every $\alpha_j\in S$, and
$0$ otherwise.  Taking $c_k$ to be the number of connected simple
graphs on $k$ labeled vertices refines the graph example of
\cite[Example~1.1(i)]{ASS}, and so on.

Theorem~\ref{thm:main} is the evident ``doubled'' variant of
Proposition~\ref{prop:expseed}: blocks have even sizes $2\alpha_j$,
the decorations of a $2k$-set are the $E_{2k-1}$ alternating words
beginning with the maximum (Lemma~\ref{lem:blockwords}), and the seed
is $\sec(\sqrt t\,)$ with $b_k=kE_{2k-1}/(2k)!$, in accordance with
Lemma~\ref{lem:telescope} for $d=2$.  What is special to the
alternating case, and constitutes the content of
Lemmas~\ref{lem:concat} and~\ref{lem:decomp}, is that the block
decomposition can be read off from a single permutation $w$: the cut
points are located by the records of $\hat w$.

\section{Examples}\label{sec:examples}
This section gathers concrete illustrations of the results and complete
tables of $N(\alpha)$ for $n\le5$.

\begin{remark}\label{rem:verification}
  The finite checks quoted here were carried out by exact exhaustive
  enumeration for the displayed ranges: all alternating permutations of
  $[2n]$ were generated, their odd-indexed subwords and record
  compositions were computed directly, and the resulting counts were
  compared with the closed formulas and tables.
\end{remark}

We begin by running the bijection of
Section~\ref{sec:count} in both directions on the running example.

\begin{example}\label{ex:worked}
  Let $w=7,2,5,4,8,3,10,6,9,1\in\Alt_{10}$ as in
  Example~\ref{ex:intro}, so $\rc(\hat w)=(2,1,2)$ with partial sums
  $(s_1,s_2,s_3)=(2,3,5)$.  The bijection of Lemma~\ref{lem:decomp}
  cuts $w$ at the doubled record positions, producing
  \[
  u^1=7,2,5,4,\qquad u^2=8,3,\qquad u^3=10,6,9,1,
  \]
  with blocks $B_1=\{2,4,5,7\}$, $B_2=\{3,8\}$, $B_3=\{1,6,9,10\}$.
  One checks directly that each $u^j$ is alternating and begins with
  $\max B_j$; and $\max B_1=7<\max B_2=8<\max B_3=10$.  Conversely, the
  number of $w\in\Alt_{10}$ with $\rc(\hat w)=(2,1,2)$ is, by
  Theorem~\ref{thm:main},
  \[
  \binom{3}{3}E_3\cdot\binom{5}{1}E_1\cdot\binom{9}{3}E_3
  =2\cdot5\cdot 168=1680,
  \]
  in agreement with Table~\ref{tab:n5}.
\end{example}

\begin{example}\label{ex:tables}
  Tables~\ref{tab:small} and~\ref{tab:n5} list $N(\alpha)$ for all
  $\alpha\vDash n$, $2\le n\le5$ (for $n=1$ the only composition is
  $(1)$, with $N(1)=E_2=1$).  Every entry was confirmed by exhaustive
  enumeration of $\Alt_{2n}$ (Remark~\ref{rem:verification}).  For each
  $n$ the entries sum to $E_{2n}$, namely $5$, $61$, $1385$, $50{,}521$
  for $n=2,3,4,5$, respectively.

  \begin{table}[ht]
    \centering
    \begin{tabular}{|l|l|l|l|l|l|}
      \hline
      \multicolumn{2}{|c|}{$n=2$} & \multicolumn{2}{c|}{$n=3$} &
      \multicolumn{2}{c|}{$n=4$}\\
      \hline
      $\alpha$ & $N(\alpha)$ & $\alpha$ & $N(\alpha)$ & $\alpha$ & $N(\alpha)$\\
      \hline
      $(2)$   & $2$ & $(3)$     & $16$ & $(4)$       & $272$\\
      \hline
      $(1,1)$ & $3$ & $(2,1)$   & $10$ & $(3,1)$     & $112$\\
      \hline
        &     & $(1,2)$   & $20$ & $(1,3)$     & $336$\\
      \hline
        &     & $(1,1,1)$ & $15$ & $(2,2)$     & $140$\\
      \hline
        &     &           &      & $(2,1,1)$   & $70$\\
      \hline
        &     &           &      & $(1,2,1)$   & $140$\\
      \hline
        &     &           &      & $(1,1,2)$   & $210$\\
      \hline
        &     &           &      & $(1,1,1,1)$ & $105$\\
      \hline
    \end{tabular}
    \medskip
    \caption{$N(\alpha)=\#\{w\in\Alt_{2n}:\rc(\hat w)=\alpha\}$ for
    $n=2,3,4$.}
    \label{tab:small}
  \end{table}

  \begin{table}[ht]
    \centering
    \begin{tabular}{|l|l|l|l|}
      \hline
      $\alpha$ & $N(\alpha)$ & $\alpha$ & $N(\alpha)$\\
      \hline
      $(5)$     & $7936$ & $(2,1,2)$     & $1680$\\
      \hline
      $(4,1)$   & $2448$ & $(1,2,2)$     & $3360$\\
      \hline
      $(1,4)$   & $9792$ & $(3,1,1)$     & $1008$\\
      \hline
      $(3,2)$   & $2688$ & $(1,3,1)$     & $3024$\\
      \hline
      $(2,3)$   & $4032$ & $(1,1,3)$     & $6048$\\
      \hline
      $(2,2,1)$ & $1260$ & $(2,1,1,1)$   & $630$\\
      \hline
          &        & $(1,2,1,1)$   & $1260$\\
      \hline
          &        & $(1,1,2,1)$   & $1890$\\
      \hline
          &        & $(1,1,1,2)$   & $2520$\\
      \hline
          &        & $(1,1,1,1,1)$ & $945$\\
      \hline
    \end{tabular}
    \medskip
    \caption{$N(\alpha)$ for all sixteen compositions of $n=5$; the sum is
    $E_{10}=50{,}521$.}
    \label{tab:n5}
  \end{table}

  Grouping the $n=4$ column of Table~\ref{tab:small} by underlying
  partition illustrates Corollary~\ref{cor:recover}:
  \begin{align*}
    |\varphi(4)|=272,\quad
    |\varphi(3,1)|=112+336=448,\quad
    |\varphi(2,2)|=140, \\
    |\varphi(2,1,1)|=70+140+210=420,\quad
    |\varphi(1,1,1,1)|=105,
  \end{align*}
  in agreement with \eqref{eq:phiintro}; e.g.\
  $|\varphi(3,1)|=8!\cdot\frac{E_5}{6!}\cdot\frac{E_1}{2!}
  =40320\cdot\frac{16}{720}\cdot\frac12=448$.
\end{example}

\begin{example}\label{ex:lifts}
  By Theorem~\ref{thm:nsym} and Table~\ref{tab:small}, we have
  \[
  4!\,\bA_2=3\,\Psi^{(1,1)}+2\,\Psi^{(2)},
  \qquad
  6!\,\bA_3=15\,\Psi^{(1,1,1)}+20\,\Psi^{(1,2)}
  +10\,\Psi^{(2,1)}+16\,\Psi^{(3)}.
  \]
  Converting to the $S$-word basis via
  $\Psi_1=S_1$, $\Psi_2=2S_2-S_1^2$,
  $\Psi_3=3S_3-S_2S_1-2S_1S_2+S_1^3$ (from \eqref{eq:newton}) gives
  \[
  4!\,\bA_2=S^{(1,1)}+4\,S^{(2)},
  \qquad
  6!\,\bA_3=S^{(1,1,1)}+8\,S^{(1,2)}+4\,S^{(2,1)}+48\,S^{(3)}.
  \]
  Applying $\chi$ (so $S^\alpha\mapsto h_{\lambda(\alpha)}$) recovers
  the $h$-expansions $4!A_2=h_1^2+4h_2$ and
  $6!A_3=h_1^3+12h_2h_1+48h_3$ tabulated in \cite[\S5]{ASS}. Our
  computations likewise reproduce the tabulated expansions of
  $8!A_4$ and $10!A_5$.
\end{example}

\begin{example}\label{ex:negative}
  In the $S$-word and ribbon bases (Remark~\ref{rem:negative}), we have
  \begin{align*}
    8!\,\bA_4&=S^{(1,1,1,1)}+12\,S^{(1,1,2)}-8\,S^{(1,2,1)}
    +20\,S^{(2,1,1)}+16\,S^{(2,2)}\\
    &\qquad+192\,S^{(1,3)}+64\,S^{(3,1)}+1088\,S^{(4)},\\[2pt]
    8!\,\bA_4&=R_{(1,1,1,1)}+13\,R_{(1,1,2)}-7\,R_{(1,2,1)}
    +21\,R_{(2,1,1)}+49\,R_{(2,2)}\\
    &\qquad+197\,R_{(1,3)}+77\,R_{(3,1)}+1385\,R_{(4)}.
  \end{align*}
  Applying $\chi$ to the first line, the coefficients of the $S$-words
  with $\lambda(\alpha)=(2,1,1)$ sum to $12-8+20=24$, and those with
  $\lambda(\alpha)=(3,1)$ sum to $192+64=256$, matching the
  $h$-expansion
  \[ 8!A_4=h_1^4+24h_2h_1^2+256h_3h_1+16h_2^2+1088h_4 \]
  of \cite[\S5]{ASS}. The individual signed refinements, however, show
  that the nonnegative $h$-coefficients do not split nonnegatively along
  $S$-words of the canonical lift.
\end{example}

\begin{example}[An exponential seed]\label{ex:graphs}
  Take $c_k$ to be the number of connected simple graphs on $k$ labeled
  vertices, so $(c_1,\dots,c_5)=(1,1,4,38,728)$.
  Proposition~\ref{prop:expseed} then counts all labeled graphs on $[n]$
  by the sizes of their connected components listed in increasing order
  of their largest vertex.  For $n=5$ and $\alpha=(1,3,1)$, for
  instance, the count is
  $\binom{0}{0}c_1\cdot\binom{3}{2}c_3\cdot\binom{4}{0}c_1
  =1\cdot12\cdot1=12$; summing over all $\alpha\vDash5$ returns
  $2^{\binom52}=1024$.
\end{example}

\section{Appendix: AxiomProver's autonomous Lean verification}\label{sec:AI}
Here we provide the context for this project as well as the protocol used for Lean
formalization and verification (see \cite{Mathlib2020, Lean}).

\subsection*{Process}
The formal proofs provided in this work were developed and verified using Lean~4.31.0.
Compatibility with earlier or later versions is not guaranteed due to the
evolving nature of the Lean 4 compiler and its core libraries.
The relevant files are all posted in the following repository:
\begin{center}
  \url{https://github.com/AxiomMath/record-compositions}
\end{center}
For the process, there is exactly one input file: \texttt{task.md},
which contains the theorem statements for all three of the main theorems.
We note that no solution was provided to AxiomProver,
meaning AxiomProver had to generate the proofs by itself
in addition to formalizing them in Lean.

Based on this input file, AxiomProver generated two output files:
\begin{itemize}
  \item \texttt{problem.lean}, a Lean 4.31.0 formalization of the problem statement; and
  \item \texttt{solution.lean}, a complete Lean 4.31.0 formalization of the proof.
\end{itemize}

The human authors wrote this paper (without the use of AI) for human readers.
At first glance, the proofs found by AxiomProver may not resemble the narrative presented in this paper.
Turning a Lean file into a human-readable proof is difficult
because Lean is written as code for a type-checker.

\end{document}